\RequirePackage{amsthm}

\documentclass[sn-mathphys-num,pdflatex]{sn-jnl}%

\usepackage{graphicx}%
\usepackage{color}
\usepackage{multirow}%
\usepackage{amsmath,amssymb,amsfonts}%
\usepackage{amsthm}%
\usepackage{mathrsfs}%
\usepackage[title]{appendix}%
\usepackage{xcolor}%
\usepackage{textcomp}%
\usepackage{manyfoot}%
\usepackage{booktabs}%
\usepackage{algorithm}%
\usepackage{algorithmicx}%
\usepackage{algpseudocode}%
\usepackage{listings}%
\usepackage{placeins}
\usepackage{paralist}
\usepackage{transparent}
\usepackage{subfigure}%

\theoremstyle{thmstyleone}%
\newtheorem{theorem}{Theorem}%
\newtheorem{proposition}[theorem]{Proposition}%
\newtheorem{corollary}[theorem]{Corollary}%
\newtheorem{lemma}[theorem]{Lemma}%
\newtheorem{case}{Case}

\theoremstyle{thmstyletwo}%
\newtheorem{example}{Example}%
\newtheorem{remark}{Remark}%

\theoremstyle{thmstylethree}%

\newcommand{\R}{\mathbb{R}}   %
\newcommand{\PR}{\operatorname{P_a}}
\newcommand{\dom}{\operatorname{dom}}
\newcommand{\ran}{\operatorname{ran}}
\newcommand{\inte}{\operatorname{int}}
\newcommand{\ri}{\operatorname{ri}}
\newcommand{\co}{\operatorname{conv}}%
\newcommand{\cone}{\operatorname{cone}}%
\newcommand{\argmin}{\operatorname{argmin}}
\newcommand{\cartesian}{\ensuremath{\times}}
\newcommand{\eg}{\textit{e.g., }}
\newcommand{\ie}{\textit{i.e., }}

\usepackage{silence}
\begin{document}

\title[Conjugate of PLQ functions]{Computing the convex envelope of bivariate piecewise linear-quadratic functions in linear time}

 \author{\fnm{Tanmaya} \sur{Karmarkar}} %

 \author*[1]{\fnm{Yves} \sur{Lucet}}\email{yves.lucet@ubc.ca}
 \equalcont{These authors contributed equally to this work.}

 \affil*[1]{\orgdiv{Computer Science, CMPS, I. K. Barber Faculty of Science}, \orgname{UBC Okanagan}, \orgaddress{\street{3187 University Way}, \city{Kelowna}, \postcode{V1T 1T7}, \state{BC}, \country{Canada}}}

 \abstract{We compute the convex envelope of (nonconvex) bivariate piecewise linear-quadratic (PLQ) functions (bivariate quadratic functions defined on a polyhedral subdivision). Our algorithm is composed of the following steps: (1) compute the convex envelope of each quadratic piece obtaining piecewise rational functions (quadratic divided by linear function) defined over a polyhedral subdivision; (2) compute the (Legendre-Fenchel) conjugate of each resulting piece to obtain piecewise quadratic functions defined over a parabolic subdivision; (3) compute the maximum of all those functions to obtain the conjugate of the original PLQ function as a piecewise quadratic function defined on a parabolic subdivision; (4) compute the conjugate of each resulting piece; and finally (5) compute the maximum over all those functions to obtain the convex envelope (biconjugate) as rational functions (quadratic divided by linear function) defined over a polyhedral subdivision. Our contribution includes a practical algorithm running in linear time, and proving that the convex envelope is a piecewise rational function.}

\maketitle

\section{Introduction}\label{s:intro}%
There are several motivations for the present work. One goal is to obtain tight lower bounds for relaxation in global optimization. The convex envelope, which is equal to the biconjugate, provides the tightest relaxation when one wishes to solve a global optimization problem. The convex envelope also provides information on the set of minimizers. If we denote by $\argmin J$ the set of $x \in X$ minimizing $J$ on $X$ (possibly, the empty set), we easily see that
\[\overline{\co} (\argmin \ J) \subset \argmin \  \overline{\co} (J)),\]
with $\overline{\co}(J)$ denoting the closed convex envelope of $J$, with equality for example when  $\overline{\co} (J)$ is $0$-coercive~\cite[Theorem 6]{HIRIART-URRUTY-11}.
A large literature is available on global optimization and the search for tight relaxation; we refer to~\cite{LOCATELLI-13} (especially Section 4.2 on convex envelopes) for a detailed introduction and more details on relaxations relevant for our context.

Another goal is to understand the structure of the biconjugate of piecewise linear-quadratic (PLQ) functions (bivariate functions defined on a union of polyhedral set on each of which the restriction of the function is quadratic). Such understanding is of interest in itself. 

Computing the convex envelope of a function is a hard problem; even computing the convex envelope of a multilinear function over a unit hypercube is NP-Hard~\cite{CRAMA-89}. However, results for specific functions exist in the literature, in particular for quadratic bivariate polynomials~\cite{LOCATELLI-14,LOCATELLI-14a,LOCATELLI-16,LOCATELLI-18a,KHADEMNIA-24}, and for convex envelopes of bilinear functions over triangles, rectangles and special polytopes~\cite{AL-KHAYYAL-83, SHERALI-90, LINDEROTH-05, ANSTREICHER-10, ANSTREICHER-12}. It is an active subject of research~\cite{LOCATELLI-24}.

PLQ functions play a significant role in variational analysis~\cite[Section 10E, Section 11D, p. 440]{ROCKAFELLAR-98a} due to the availability of calculus rules~\cite[Example 11.28, Proposition 11.32, Corollary 11.33, Proposition 12.30]{ROCKAFELLAR-98a}, and due to their duality property~\cite[Theorem 11.42, Example 11.43, Theorem 11.42]{ROCKAFELLAR-98a}. Compared to piecewise linear functions, PLQ functions capture the curve of the original function more accurately%
. The set of convex PLQ functions is closed under common convex operators, in particular under the Legendre-Fenchel transform; see~\cite[Page 484]{ROCKAFELLAR-98a},~\cite[Proposition 3, p. 17]{GOEBEL-00} and \cite{GARDINER-13}. 

The early idea of the computation of convex transforms can be traced back to~\cite{MOREAU-65}. However, development of most of the algorithms in computational convex analysis began with the computation of the conjugate with the Fast Legendre Transform~\cite{BRENIER-89a}, which is studied in~\cite{CORRIAS-93a,LUCET-96a}. A linear-time algorithm is introduced in~\cite{LUCET-97b}. Those algorithms handle nonconvex functions but are restricted to grid domains. Extension to nongrid domains for convex functions only are proposed in~\cite{HAQUE-18,GARDINER-13} with a generalization to the partial conjugate in~\cite{GARDINER-13}. Computation of the conjugate of convex univariate PLQ functions has been well studied and linear time algorithms have been developed, both for the PLQ~\cite{LUCET-06} and the graph-matrix (GPH) data structures~\cite{GARDINER-11a}. All in all, we are not aware of algorithm to compute the conjugate of nonconvex PLQ functions over nongrid domains.

Implementation of conjugate computation algorithms can be found in the CCA library~\cite{LUCET-17,LUCET-21} that contains efficient algorithms to manipulate convex functions and to compute fundamental convex analysis transforms arising in the field of convex analysis. Algorithms to compute the conjugate numerically on grids have  been based on either parameterization~\cite{HIRIART-URRUTY-06}, manipulation of graphs (GPH model)~\cite{GARDINER-13}, or the computation of the Moreau envelope~\cite{LUCET-05c}. More complex operators such as the proximal average operator~\cite{BAUSCHKE-07a,BAUSCHKE-06a} can be built by using a combination of addition, scalar multiplication, and conjugacy operations. 

A numerical library to determine in linear time whether a PLQ function is convex~\cite{SINGH-21} is available in MATLAB~\cite{CCA2}. This library is the first to handle general piecewise quadratic functions that are not necessarily continuous and are defined on any polyhedral subdivision without approximating them with a grid.

Computational Convex Analysis has many applications in the fields of image processing, network communication, PDE, geographic information systems, computer aided design, molecular biology, medical imaging, computer graphics and robotics; see the survey~\cite{LUCET-10}.

We propose the first algorithm to compute the convex envelope of a general bivariate PLQ function not necessarily convex without approximating it with a grid. We assume the pieces are triangular; this assumption is not restrictive since we can always triangulate convex polyhedral sets in linear time~\cite[p. 49]{BERG-08}. Our algorithm is split in four steps. In Step 1 we compute the convex envelope of each piece of a PLQ function $f$ using formulae derived in Appendix~\ref{s:convexenvelope_formulae} using the method given in \cite{LOCATELLI-16}; in Step 2, we apply \cite{KUMAR-19} to compute the conjugate of each resulting piece; and in Step 3, we compute the maximum of all the resulting piecewise functions to obtain the conjugate $f^*$; refer to \cite{KARMARKAR-24}. Step 4 computes the biconjugate to obtain the convex envelope by first computing the conjugate of each piece and then computing the maximum of all the resulting piecewise functions.

Our contribution is threefold: 
 \begin{inparaenum}[(1)]
    \item developing formulae to compute in linear time the convex envelope and conjugate of each piece (this steps improves Locatelli's method~\cite{LOCATELLI-16} that has a worst case exponential time),
    \item proving the entire algorithm runs in linear time; and 
    \item releasing an open-source code that uses symbolic computation and rational arithmetic to avoid floating point errors.
\end{inparaenum}

We begin by giving some preliminaries in Section~\ref{s:prelim}, describing the structure of the conjugate of a PLQ function in Section~\ref{s:struct} and then the method to compute the domain and the conjugate expressions in Sections ~\ref{s:dom} and~\ref{s:expr}. We present examples in Section~\ref{s:examples}, and conclude with future work in Section~\ref{s:conclusion}.

\section{Preliminaries}\label{s:prelim}%
The \emph{domain} of a function $f:\R^n\to \R\cup\{+\infty\}$ is the set $\{x : f(x)<+\infty\}$, and the function is called \emph{proper} if it has nonempty domain. The \emph{indicator function} of $C \subset \R^n$ is denoted $I_C:\R \to \{0,\infty\}$ with $I_C(x)=0$  if $x \in C$ and $+\infty$ otherwise. 

A function is called a \emph{piecewise function} if it can be written $f(x)=f_i(x)$ for $x\in R_i$ with 
$\cup_i R_i=\R^n$. We call  the pair $(f_i,R_i)$ a piece. 
Recalling~\cite[Definition 10.20]{ROCKAFELLAR-98a}, a function $f:\R^n\to \R\cup\{+\infty\}$ is called \emph{Piecewise Linear-Quadratic (PLQ)} if there exists a finite number of polyhedral sets $R_i$ with $\dom f = \cup_i R_i$ and $f$ restricted to $R_i$, noted $f_i$, can be written $f_i(x)=1/2 x^T Q_i x + q_i^T x + \kappa_i$ with $\kappa_i\in \R$, $q_i\in\R^n$, and $Q_i\in\R^{n\times n}$ a symmetric matrix. We use column vectors with $q^T x$ denoting the dot product of row vector $q^T$ with column vector $x$.
PLQ functions enjoy several properties, \eg their domain is closed, they are lower semicontinuous, and continuous relative to their domain~\cite[Proposition 10.21]{ROCKAFELLAR-98a}.

An \emph{underestimator} is any function that lies below a given function. The \emph{closed convex envelope} is the largest lower semicontinuous convex underestimator of $f$.

We note 
\[f^*(y)=\sup_{x\in \R^n}(x^Ty - f(x))\] 
the \emph{(Legendre-Fenchel) conjugate} of $f$.  The \emph{biconjugate} is noted $f^{**}$, and we recall that when $f$ is proper, closed and convex, $f=f^{**}$, otherwise $f^{**}$ is the closed convex envelope of $f$~\cite[Theorem 11.1]{ROCKAFELLAR-98a}.    

To describe the domain of piecewise functions, we note a \emph{hyperplane} $H \subset \R^d$ a set of the form $H= \{x \in \R^d: \alpha^Tx - \beta = 0\}$, and a \emph{halfspace} $H \subset \R^d$ a set of the form $H= \{x \in \R^d: \alpha^T x - \beta \le 0\}$ or $H= \{x \in R^d: \alpha^Tx - \beta \ge 0\}$; where the vector $\alpha \in \R^d \backslash  \{0\}$ is called a normal vector to $H$.

A \emph{polytope} is a convex combination of finite numbers of vertices $\{v_1, \dots, v_k\}$. We write $
\co V = \{ \lambda_1 v_1 + \dots + \lambda_k v_k: \lambda_i \ge 0, \sum_{i=1}^{k}\lambda_i=1 \}$ or in matrix notation $\co V =  \{ V\lambda: \lambda_i \ge 0, \sum_{i=1}^{k}\lambda_i=1 \}$,
where $V$ is the matrix formed by column vectors. Note that by our definition, polytopes are always bounded.

A \emph{polyhedral cone} is defined as a conic combination of a finite numbers of directions $\{d_1,\dots,d_m\}$; we write $\cone C = \{\mu_1d_1+ \dots + \mu_md_m : \mu_i \ge 0\}$, and $\cone C = \{D\mu : \mu_i \ge 0\}$;
where $D$ is the matrix formed by column vectors $d_i$.

A \emph{polyhedral set (polyhedron)} $P \subset \R^d$ is defined as the intersection of a finite number of closed halfspaces and hyperplanes; it has the $H$-representation 
\[P=\{x\in \R^d: Ax\leq b\}\] 
with $A\in\R^{m\times d}$ and $b\in \R^m$. 
We can also describe a polyhedral set in Vertex-representation. According to the Minkowski-Weyl Theorem~\cite[Corollary 3.53]{ROCKAFELLAR-98a}, a polyhedral set can be written as a sum of a polytope and a polyhedral cone. Therefore, in $V$-representation a polyhedral set $P$ is described as
\[
	P = \co V + \cone C 
	  = \bigg\{ V\lambda + D\mu: \lambda \in \R^k, \lambda \ge 0, \mu \ge 0, \sum_{i=1}^{k}\lambda_i=1 \bigg\}.
\]
Since the intersection of convex sets is convex, polyhedral sets are convex.

A $d$-dimensional face of a convex set $C\subset \R^n$ is a $d$-dimensional convex subset $C'$ of $C$ such that every (closed) line segment in C with a relative interior point in $C'$ has both endpoints in $C'$~\cite[Page 162]{ROCKAFELLAR-70}. The empty set and $C$ itself are faces of $C$. The two-dimensional faces of $C$ are called faces.
The one-dimensional faces of a convex set C are called edges. For $u,v\in\R^d, u \neq v$  an edge can be written as
\[\bigg\{x\in\R^d :   x=\delta_1u+\delta_2v, \delta_1+\delta_2=1, (\delta_1,\delta_2)\in \Delta\subset\R^2\bigg\}.\]
An edge in $\R^2$ is a segment if $\Delta = \R_+ \cartesian \R_+ $ where $\R_+= \{\alpha\in \R:\alpha\ge 0\}$, a ray if $\Delta = \R_+ \cartesian \R$, and a line if $\Delta = \R^2$.
The zero-dimensional faces of a convex set $C$ are called vertices and are the extreme points of $C$.

A \emph{convex} set defined as the union of a finite number of polyhedral regions, namely
$R = \bigcup_{i=1}^n R_{i}$,
where $R \subseteq \R^2$, is said to be a polyhedral subdivision~\cite[Definition 1]{PATRINOS-11}, if $R_i$ is a polyhedral set and for any  \(j, k \in \{1, \dots, n\}, j \neq k, R_{j} \cap R_{k}\) is either empty, a vertex or an edge.
(Polyhedral subdivisions eliminate degenerate subdivisions for which the intersection of two polyhedral regions is a strict subset of an edge, or of a face.)

Assume that $f: \R^2 \rightarrow \R \cup \{+\infty\}$  is a PLQ function and that $\dom f = \cup_i P_i $ is a polyhedral subdivision. An entity is a $d$-dimensional face of $P_i$ for some index $i$. An entity is either a vertex, an edge or a face of the polyhedral subdivision of $\dom f$~\cite{GARDINER-11}.

We now define the geometric shapes required to describe the domain of the conjugate functions we obtain. 
A \emph{parabola} $\PR\subset\R^2$ is a subset of the plane that can be written as
\[\PR=\{(x,y)\in\R^2: ax^2+b x y+c y^2+ d x+e y +f = 0\},\]
where $a, b, c, d, e, f\in\R$ are not all zero and satisfy \(b^2 - 4ac = 0\) .
Note that our definition includes lines when $a=b=c=0$, and the empty set when $a=b=c=d=e=0$, $f\neq 0$; but excludes the entire plane since we impose that $(a,b,c,d,e,f)\neq 0$.

A \emph{parabolic region} \(\PR \subset  \R^2\) is formed by the intersection of a finite number of parabolic inequalities. It can be written as 
\[\PR = \{x \in \R^2: a_ix^2 + b_ixy + c_iy^2 + d_ix + e_iy + f_i \leq 0, \forall i=1, \dots , k\},\] 
where $(a_i,b_i,c_i,d_i,e_i,f_i)\neq 0$, and \( b_i^2- 4a_i c_i = 0\).
The following sets are special cases of parabolic regions: polyhedral sets, polyhedral cones, polytopes, edges, and vertices. The set $\{(x,y)\in\R^2: y\geq x^2\}$ is an example of a non-polyhedral parabolic region while the set $\{(x,y) : y\leq x^2, y\geq 1\}$ is a nonconvex non-connected parabolic region.

A convex set $R\subset \R^2$ is called a \emph{parabolic subdivision} if $R$ can be written as the finite union of parabolic regions $R_i$, \ie $R = \cup_{i=1}^n R_{i}$ and for any $j, k \in \{1, \dots , n\}$, $j\neq k$, the intersection $R_j \cap R_k$ is either empty or is contained in a parabola.
A polyhedral subdivision is a special case of a parabolic subdivision (since lines are a special case of parabolas).

A face of a parabolic region \(\PR \subset  \R^2\) is defined as the interior of a nonempty intersection of a finite number of parabolic inequalities. It can be written as 
\[\PR = \{x \in \R^2: a_ix^2 + b_ixy + c_iy^2 + d_ix + e_iy + f_i < 0, i=1, \dots , k\},\] 
where $(a_i,b_i,c_i,d_i,e_i,f_i)\neq 0$, and \( b_i^2- 4a_i c_i = 0\).

We define a parabolic function as a convex quadratic function with a zero discriminant. We write $q_p(x,y) = ax^2+b x y+c y^2+ d x+e y +f \in\R^2$, where $a>0$, $c>0$, $b, d, e, f\in\R$  and  \(b^2 - 4ac = 0\) .

Finally, we use standard definitions for subdifferentials, subgradients, and normal cones; see for example~\cite{BAUSCHKE-24,HIRIART-URRUTY-93a,HIRIART-URRUTY-93b,ROCKAFELLAR-70a}.

While the convex envelope of a piecewise function is not always the same as the convex envelope of each piece, for bivariate quadratic polynomials we can leverage the convex envelope of each piece to obtain the convex envelope. Let us denote the $i^{th}$ piece of a piecewise function $f$ by $(f_i,P_i)$ where $f_i$ is a function and $P_i$ is a set.
From \cite[Theorem 3.4.1]{HIRIART-URRUTY-93b}, we have $(\inf_i f_i)^*=\sup_i f_i^*$. Using \cite[Proposition 2.6.1]{HIRIART-URRUTY-93b} and the biconjugate theorem $\co f = f^{**}$, we get
\begin{align*}
	\co[\inf_i(f_i+I_{P_i})]=&\co[\inf_i(\co(f_i+I_{P_i}))],\\
	=&[\inf_i(\co(f_i+I_{P_i}))]^{**},\\
	=&[\sup_i([\co(f_i+I_{P_i})]^{*})]^*.
\end{align*}

Hence we can find the biconjugate of a bivariate PLQ function by the following steps:
\begin{enumerate} \label{steps}
	\item Compute the convex envelope of each piece $\co(f_i+I_{P_i})$.
	\item Compute the conjugate of each convex piece $[\co(f_i+I_{P_i})]^{*}$.
	\item Compute the maximum of the conjugates over the entire PLQ function to obtain
	$f^*=\sup\limits_{i}(\co(f_i+I_{P_i})^{*})$.
    \item Compute the biconjugate by computing the conjugate of the maximum of the conjugates over the entire PLQ function $f^{**}=(\sup\limits_{i}(\co(f_i+I_{P_i})^{*}))^*$.
\end{enumerate}

Step 1 of this process, finding the convex envelope of each piece $(f,P)$ with $f(x)=x^TAx+b^Tx+c$, is computed as follow. We compute the eigenvalues of $A$. If they are both nonnegative, the function is convex and we are done. If they are both nonpositive, the function is concave and the convex envelope is obtained as the convex hull of the points $\{(x,f(x)) : x$ is a vertex of $P\}$. If $A$ is indefinite (one positive and one negative eigenvalue), we note that $\co f(x) = \co (x^TAx) + b^Tx+c$; see~\cite[p. 93]{HIRIART-URRUTY-93a}. Hence, we only focus on the quadratic part. We rotate the polyhedral set so that in the new basis, the function is now $(x_1,x_2)\mapsto x_1 \cdot x_2$. 

We consider triangulations of the domain. When we restrict the bilinear function $(x_1,x_2)\mapsto x_1 \cdot x_2$ to and edge and get a strictly convex function, we call that edge a convex edge. 
Each piece $(P_i, f_i)$ of the domain now consists of triangle $P_i$ with either zero, one, two, or three convex edges. To avoid cases with three convex edges, we split such triangles into two subtriangles by adding a horizontal edge through a vertex such that each triangle has two strictly convex edges. Hence, we will only consider triangles with zero, one or two convex edges.

We then use direct formulae, which we developed using the method given in~\cite{LOCATELLI-16}, to obtain the convex envelopes of bilinear forms over triangles. The derivation of those formulae is explained in Appendix~\ref{s:convexenvelope_formulae}.

Kumar~\cite{KUMAR-19a} showed how to compute the conjugate of a rational function defined on a polytope. We further fine tuned his method to obtain formulae given in Appendix~\ref{s:conjugate_formulae} to compute the conjugate which is Step 2 of the above process. 
We then computed the maximum of the functions obtained in Step 3 thereby obtaining the conjugate of a bivariate PLQ function.

Beyond simplifying previous work~\cite{LOCATELLI-16,KUMAR-19a}, our focus here is to explain how to perform the last step, required to obtain the convex envelope, by computing the conjugate of the conjugate obtained in Step 3 thereby obtaining the biconjugate of a bivariate PLQ function. Figure~\ref{fig:process} shows the overall process of computing the biconjugate. As shown in the figure, we can now skip Step 1 as we have direct formulae to get the conjugate of the bilinear form on a triangle. 

\begin{figure}
        \centering
        \includegraphics[width=.7\textwidth]{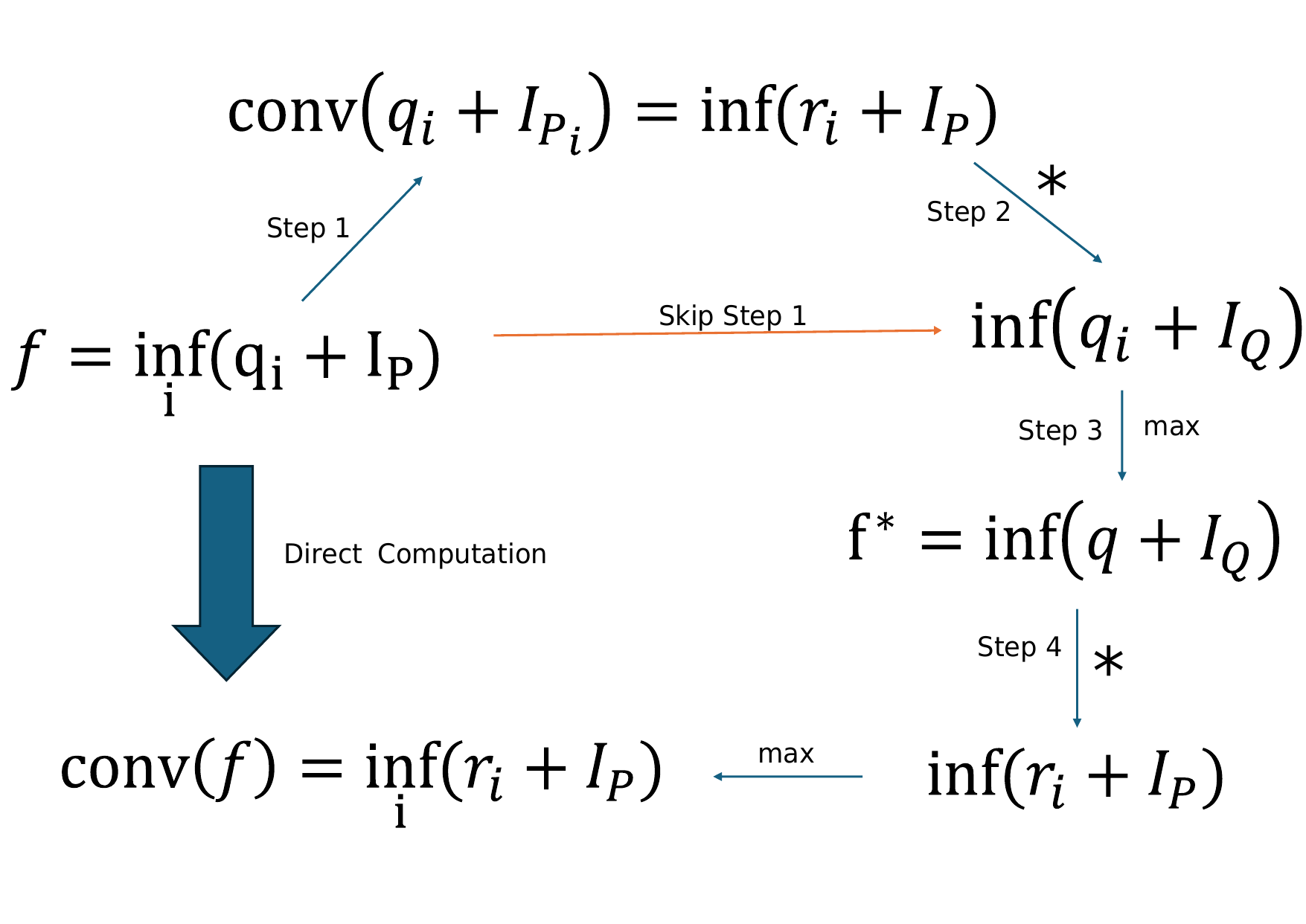}
        \caption{Description of the process. The computation follows the thin arrows steps 1, 2, 3, 4 and max. Future work will look at direct computation (thick arrow).}
        \centering
        \label{fig:process}
    \end{figure}

\FloatBarrier

Our implementation in MATLAB uses symbolic computation and rational numbers to avoid floating-point errors. Using floating-point arithmetic can give rise to partitions of the domain with degenerate subsets, and floating-point coefficients of functions make it difficult to detect equal functions on adjacent domains. Indeed, one of the motivations of~\cite{SINGH-21} is to check that intermediate computations of conjugates are still convex (as they should be without floating point errors). After observing these difficulties, we decided to work with symbolic representation of our functions with rational coefficients.

\section{Structure of the conjugate of a PLQ function}\label{s:struct}
    The conjugate of a PLQ function has a particular form. It is a piecewise function with four types of pieces:
    \begin{enumerate}
        \item Type 1 : Linear functions on a polyhedral subdivision, \eg pieces 2,3,4,6 and 7 from Figure~\ref{fig:ex_max}.
        \item Type 2 : Linear function defined on a region with a parabolic edge and two parallel linear edges. This region is the convex side of the parabolic edge (\eg Piece 1 of Figure~\ref{fig:ex_max}).
        \item Type 3 : Parabolic function defined on the nonconvex side of the parabolic edge between the same two linear parallel edges(\eg Piece 5 of Figure~\ref{fig:ex_max}). Note that this region is nonconvex because it obtained as the union of convex sets (rays) by merging convex sets to simplify the function representation.
        \item Type 4 : Parabolic function on an unbounded polyhedral subdivision which has three edges, two of which are parallel (\eg Figure~\ref{fig:quadPoly}).
    \end{enumerate}
    
    We exploit this structure to compute the biconjugate of the PLQ function.

    \begin{figure}
        \centering
        \def\svgwidth{0.40\textwidth}
        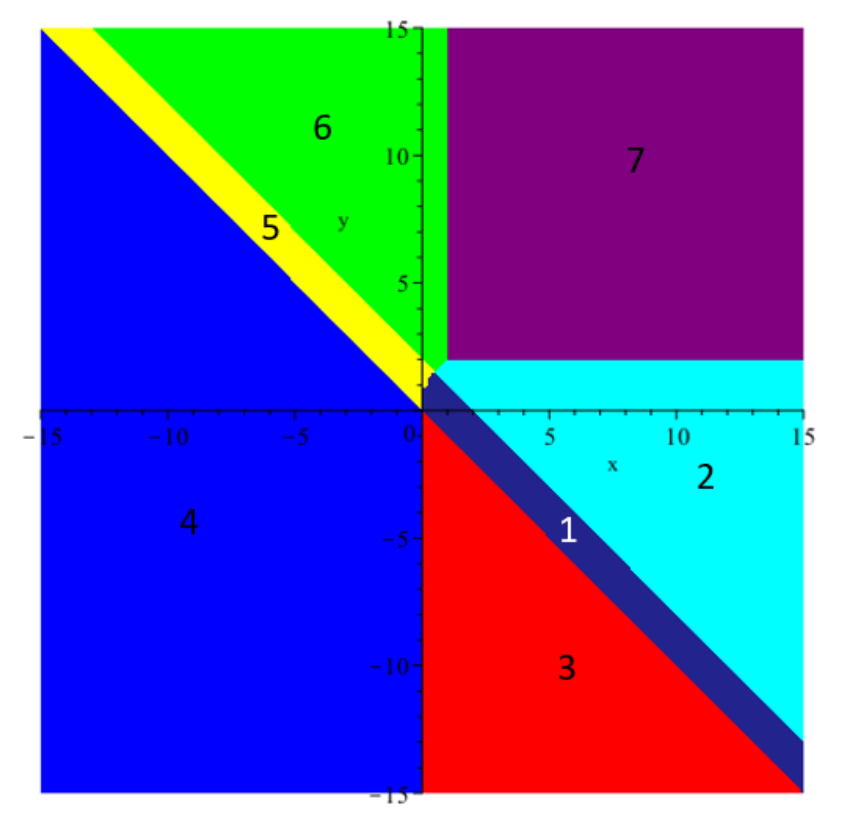
       \caption{Piecewise function  (Parabolic functions defined on a polyhedral subdivision.) (Functions are defined in Table~\ref{table:Fig1})}
        \label{fig:ex_max}
   \end{figure}

    \begin{table}[tbph]
\centering
\caption{Function definitions for Figure \ref{fig:ex_max}.}
	\label{table:Fig1}
\begin{tabular}{@{}llr@{}} \toprule 
    Piece Number & Expression\\
    \midrule
	1 &$2s_1$ \\
    2 &$2s_1$ \\
    3 &$2s_1$ \\
    4 &$(s_1+s_2)^2/4$ \\
    5 &$s_1+s_2-1$ \\
    6 &$2s_1+s_2-2$ \\
\end{tabular}
\end{table}

   \begin{figure}%
	\centering
	\subfigure[ Type 4   ]{
		\centering
        \includegraphics[width=.45\textwidth]{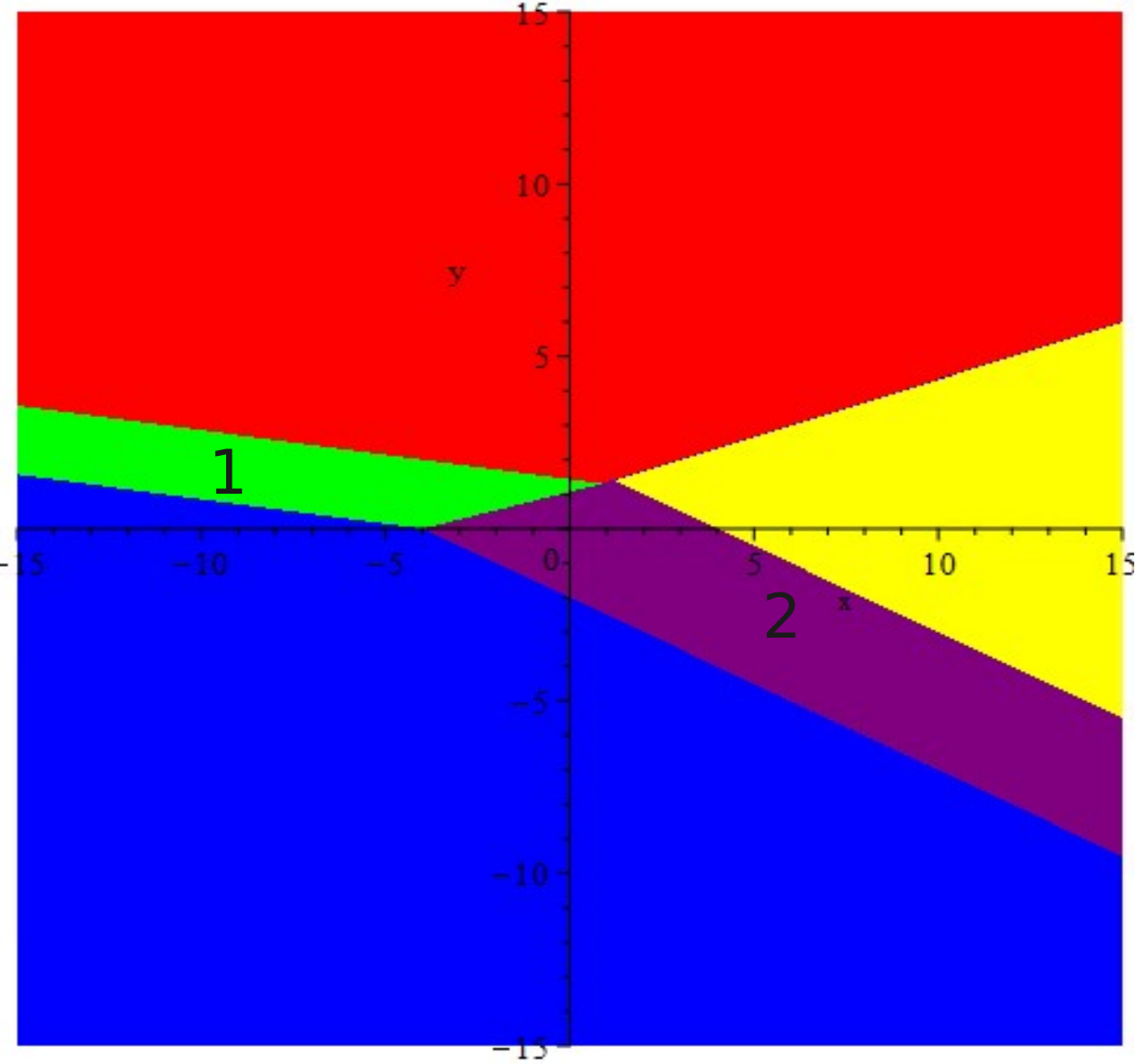}}	
	\subfigure[ Subdifferential of Piece 2  ]{
		\centering
	  \includegraphics[width=.45\textwidth]{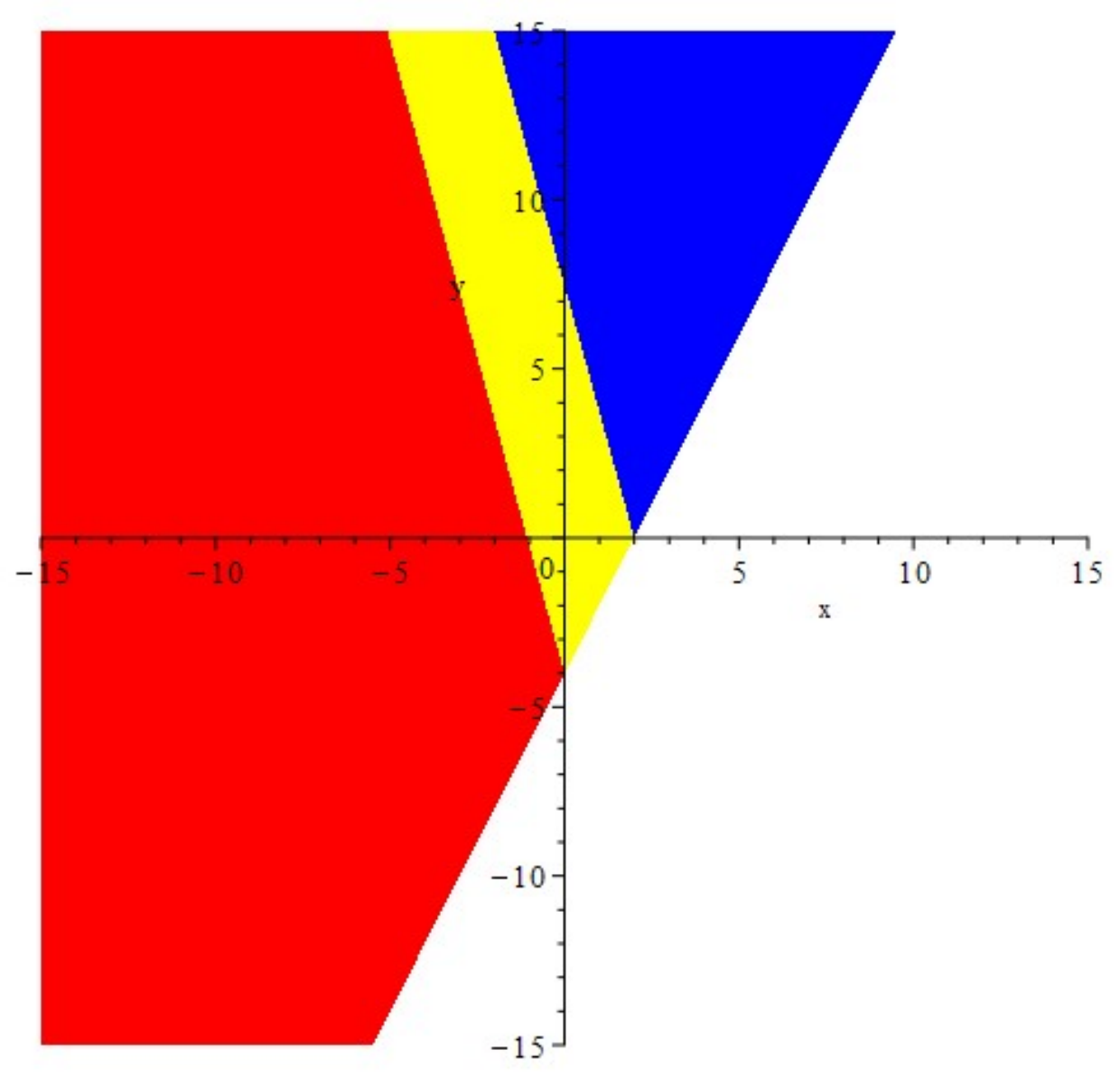}}	\\
	\caption{Piecewise function  (Parabolic functions defined on a polyhedral subdivision.)(Functions are defined in Table~\ref{table:Fig2})}
	\label{fig:quadPoly}
\end{figure}

\begin{table}[tbph]
\centering
\caption{Function definitions for Figure \ref{fig:quadPoly}.}
	\label{table:Fig2}
\begin{tabular}{@{}llr@{}} \toprule 
    Piece Number & Expression\\
    \midrule
	1 &$\frac{1}{8}s_1^2 + \frac{1}{2}s_1s_2 + s_1 + \frac{1}{2}s_2^2 - 2s_2 + 2$ \\
    2 &$\frac{1}{28}s_1^2 + \frac{1}{2}s_1s_2 + \frac{2}{7}s_1 + \frac{7}{4}s_2^2 - 2s_2 + \frac{4}{7}$ \\
    
\end{tabular}
\end{table}

   \FloatBarrier

We now discuss the computation of the conjugate of a parabolic function defined over a parabolic subdivision. We start with computing the subdifferentials corresponding to each entity of the subdivision and then compute the expressions over them to finally obtain the conjugate. An entity of the parabolic subdivision is either a vertex, an edge or a face of the parabolic subdivision of $\dom f^*$

\section{Domain of the biconjugate}\label{s:dom}
    Given a nonconvex PLQ function, we compute the conjugate and obtain a piecewise parabolic function.  We now compute the conjugate of each piece of the conjugate by first computing its domain which turns out to be a polyhedral subdivision. (Recall that $\overline{\dom} f^* = \overline{\ran} \partial f$ %
    \cite[Remark 18.16]{BAUSCHKE-24}.) We decompose the domain of $f^*$ (parabolic subdivision) into its interior, its vertices and its edges. 
    
    The subdifferential of any point in the interior or any of the vertices is identical for all four types of pieces mentioned in Section~\ref{s:struct}.

    Let 
    \begin{equation} \label{eq:qp}
     q_p(x) = ax_1^2+bx_1x_2+cx_2^2+dx_1+ex_2+f, \text{ with }  a>0, c>0, b^2-4ac=0, 
    \end{equation}
    be a parabolic function. Note that $q_p$ is a convex function (the letter q is used for quadratic, and the subscript p for parabolic).

    \begin{proposition}[Interior] \label{Prop-1}
        Consider $q_p$ defined in~\eqref{eq:qp}. Then there exists $\alpha_i$ such that 
        \[\bigcup_{x\in\dom q_p} \partial q_p(x) \subseteq \{s:L(s)=0\},\] 
        where $L(s)=\alpha_1s_1+\alpha_2s_2+\alpha_0$. 
    \end{proposition}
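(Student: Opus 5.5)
Since $q_p$ is a differentiable quadratic on $\R^2$, for every $x$ in its domain $\partial q_p(x)=\{\nabla q_p(x)\}$. The plan is therefore to show that the gradient map $x\mapsto \nabla q_p(x)$, which is affine, has a range contained in a line; the claimed $L$ is then the equation of that line. Concretely,
\[
\nabla q_p(x)=\begin{pmatrix} 2a x_1 + b x_2 + d\\ b x_1 + 2c x_2 + e\end{pmatrix} = Mx + \begin{pmatrix} d\\ e\end{pmatrix},\qquad M=\begin{pmatrix} 2a & b\\ b & 2c\end{pmatrix}.
\]
The key observation is that $\det M = 4ac-b^2=0$ by the parabolic condition in~\eqref{eq:qp}. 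Hence $M$ is singular, and $M\neq 0$ because $a>0$, so $\operatorname{ran} M$ is exactly one-dimensional.

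Next I will exhibit a nonzero vector orthogonal to $\operatorname{ran} M$. Since $M$ is symmetric, $\operatorname{ran} M = (\ker M)^{\perp}$, so any nonzero $v\in\ker M$ works. An explicit choice is $v=(b,-2a)^T$. Its first component of $Mv$ is $2ab-2ab=0$, and its second is $b^2-4ac=0$. Moreover $v\neq 0$ because $a>0$.

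Setting $\alpha_1=b$, $\alpha_2=-2a$, and $\alpha_0=-(bd-2ae)$, every $s=\nabla q_p(x)$ satisfies
\[
\alpha_1 s_1+\alpha_2 s_2 = v^T(Mx) + v^T\begin{pmatrix} d\\ e\end{pmatrix} = 0 + bd-2ae .
\]
Therefore $L(s)=\alpha_1 s_1+\alpha_2 s_2+\alpha_0=0$. Taking the union over all $x\in\dom q_p$ gives the stated inclusion. Because $(\alpha_1,\alpha_2)\neq 0$, the set $\{s : L(s)=0\}$ is a genuine line and not all of $\R^2$.

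There is no real obstacle. The only point needing care is to choose $(\alpha_1,\alpha_2)$ nonzero, which is where $a>0$ (or, symmetrically, $c>0$) is used; otherwise the statement would hold trivially with $\alpha=0$.
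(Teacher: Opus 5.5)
Your proof is correct and is essentially the paper's argument: the paper takes the same coefficients $\alpha_1=b$, $\alpha_2=-2a$, $\alpha_0=2ae-bd$ and checks by direct substitution that $L(\nabla q_p(x))=(b^2-4ac)x_2=0$, which is exactly what your kernel-vector identity $v^T M=0$ encodes. Your remark that $(\alpha_1,\alpha_2)\neq 0$ because $a>0$, so the set really is a line rather than all of $\R^2$, is a small point the paper leaves implicit.
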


    \begin{proof}
        Take $s\in \bigcup_{x\in\dom q} \partial q(x)$. There exists $x$ such that $s_1=2ax_1+bx_2+d$ and $s_2=bx_1+2cx_2+e$. Define $\alpha_1=b$, $\alpha_2=-2a$, $\alpha_0=-db+2ae$, and substitute values of $s_1$ and $s_2$ to obtain
        \[\alpha_1s_1+\alpha_2s_2+\alpha_0 = (b^2-4ac)x_2.\]
        Using $b^2-4ac=0$ finishes the proof.
    \end{proof}
    
    \begin{remark}
    Proposition~\ref{Prop-1} is more general than~\cite[Proposition 1]{KUMAR-19} as a polyhedral subdivision is a special case of a parabolic subdivision.     
    \end{remark}
    
    \begin{corollary}[Interior] \label{Cor-1}
        For a bivariate parabolic function $q_p$ as defined in~\eqref{eq:qp}, and a set $P$ belonging to a parabolic subdivision $\PR$, define $f(x) = q_p(x)+I_{P}(x)$, then for all $x \in \inte(P)$, the set $\cup_{x \in \inte(P)} \partial f(x)$ is contained in a line.
    \end{corollary}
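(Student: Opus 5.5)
The plan is to reduce Corollary~\ref{Cor-1} to Proposition~\ref{Prop-1}. The key observation is that at interior points the indicator function contributes nothing to the subdifferential.

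First, fix $x\in\inte(P)$. Then $I_P$ is identically $0$ on a neighborhood of $x$, so $f$ coincides with $q_p$ near $x$. Since the subdifferential of a convex function is determined by its local behaviour, $\partial f(x)=\partial q_p(x)$. Alternatively, we can use the sum rule $\partial f(x)=\partial q_p(x)+N_P(x)$, which is valid because $q_p$ is finite and continuous everywhere, together with $N_P(x)=\{0\}$ for interior points.

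One subtlety is that $P$, a piece of a parabolic subdivision, need not be convex. The local argument avoids this issue: for interior $x$, both the convex subdifferential of $f$ at $x$ (if $f$ is convex near $x$) and the Fréchet or limiting subdifferential reduce to those of $q_p$. Because $q_p$ is differentiable, this gives $\partial q_p(x)=\{\nabla q_p(x)\}$.

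Next, take the union over $x\in\inte(P)$:
\[
\bigcup_{x\in\inte(P)}\partial f(x)=\bigcup_{x\in\inte(P)}\partial q_p(x)\subseteq\bigcup_{x\in\dom q_p}\partial q_p(x).
\]
The inclusion holds because $\dom q_p=\R^2$. By Proposition~\ref{Prop-1}, the right-hand side lies in $\{s: b s_1-2a s_2-db+2ae=0\}$.

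It remains to confirm that this set is a genuine line, that is, that $(\alpha_1,\alpha_2)\neq 0$. This holds because $\alpha_2=-2a\neq 0$, since $a>0$. This completes the proof.

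The only step needing care is the identification $\partial f(x)=\partial q_p(x)$ on $\inte(P)$ when $P$ may be nonconvex. As described above, this is handled by the local nature of the subdifferential.
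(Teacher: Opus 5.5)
Your proposal is correct and is essentially the paper's argument: on $\inte(P)$ the indicator contributes nothing, so $\partial f(x)=\partial q_p(x)=\{\nabla q_p(x)\}$. Proposition~\ref{Prop-1} then places all these gradients on the line $\{s: bs_1-2as_2-db+2ae=0\}$.

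Your additions go beyond the paper's one-line proof without changing the route. You check that the line is nondegenerate because $a>0$. You also treat nonconvex $P$. There, for the global Fenchel subdifferential, you only get $\partial f(x)\subseteq\{\nabla q_p(x)\}$, possibly empty, rather than equality, but that containment is all the statement needs.
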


    \begin{proof}
        For any point $x\in \inte P$, we have $\partial f = \partial q$ so the result follows by Proposition~\ref{Prop-1}.
    \end{proof}

    \begin{example}
        For the parabolic function, $q(x)=(x_1+x_2)^2/4$ defined in the interior of a parabolic division as shown in Figure~\ref{fig:ex_max}, the subdifferential is contained in a line segment $\{s:s_2=0\}$ between $(0,0)$ and $(1,1)$ as shown in Figure \ref{fig:exp}[e].    
    \end{example}

    \begin{proposition}[Vertices] \label{Prop-2}
        For a parabolic function $q_p$ as defined in~\eqref{eq:qp}, a parabolic unbounded region $P$, and a vertex $v$ of $P$. Let $f(x) = q_p(x) + I_{P} (x)$. Then $\partial f(v)$ is an unbounded polyhedral set.
    \end{proposition}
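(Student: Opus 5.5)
We use the subdifferential sum rule. Since $q_p$ is a finite convex differentiable function on $\R^2$ and $I_P$ is convex with nonempty domain, we have $\partial f(v) = \nabla q_p(v) + N_P(v)$, where $N_P(v)$ is the normal cone to $P$ at $v$. Here $P$ is a region of the parabolic subdivision that is convex (polyhedral pieces and the convex side of a parabolic edge, Types 1, 2 and 4 of Section~\ref{s:struct}). For the nonconvex Type~3 region we work with its convex constituent pieces meeting at $v$. The proof then reduces to showing that $N_P(v)$ is an unbounded polyhedral cone. The translate of such a cone by the single point $\nabla q_p(v)=(2av_1+bv_2+d,\; bv_1+2cv_2+e)$ is again an unbounded polyhedral set.

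To describe $N_P(v)$, we first list the constraints of $P$ that are active at $v$: $g_i(x)=a_ix_1^2+b_ix_1x_2+c_ix_2^2+d_ix_1+e_ix_2+f_i\le 0$ with $g_i(v)=0$, for $i\in I(v)$. At a vertex, at least two distinct boundary curves (lines or parabolas) meet transversally. Each $g_i$ is either affine or a parabolic function (convex quadratic), so we can use convex-analysis calculus. We apply the normal cone formula for an intersection of convex sublevel sets under a Slater-type condition. Such a point exists because $P$ is a two-dimensional region of the subdivision, so its interior is nonempty and the strict inequalities hold there. This gives
\[
N_P(v)=\cone\{\nabla g_i(v): i\in I(v)\}.
\]
This is a finitely generated cone, hence polyhedral by Minkowski--Weyl~\cite[Corollary 3.53]{ROCKAFELLAR-98a}. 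For the nonconvex Type~3 region, we work with the convex pieces meeting at $v$. Their normal cones are again finitely generated by the gradients of the active linear or parabolic constraints.

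It remains to show unboundedness, which amounts to $N_P(v)\neq\{0\}$. Since $v$ is a vertex, it lies on the boundary of $P$, and it is an extreme point when $P$ is convex. We need at least one nonzero gradient $\nabla g_i(v)$ among the active constraints. Any nonzero vector $u$ in the cone gives the ray $\{\nabla q_p(v)+tu: t\ge 0\}\subseteq\partial f(v)$.

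\textbf{Main obstacle.} The main difficulty is ruling out a degenerate active parabolic constraint with $\nabla g_i(v)=0$. For a parabola with $b_i^2=4a_ic_i$, the gradient vanishes only at points where the linear system $\nabla g_i=0$ is consistent. In that case the zero set of $g_i$ is degenerate (a line, a pair of parallel lines, or a single line counted twice), which does not occur as a genuine parabolic edge. To handle this, we will replace $g_i$ by its affine description and use its nonzero normal. Alternatively, we note that at least one active linear edge is always present at a vertex in the four types of Section~\ref{s:struct}, since every region there has linear edges and the vertices lie on them. Either way this gives a nonzero generator and completes the proof.
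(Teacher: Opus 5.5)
Your proof is correct for convex $P$ and has the same skeleton as the paper's. Both use the sum rule $\partial f(v)=\nabla q_p(v)+N_P(v)$ and then show that $N_P(v)$ is a nonzero polyhedral cone. The difference is that you describe that cone dually.

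The paper gives an inequality description. It takes $N_P(v)$ as the polar of the cone spanned by the tangent directions $y_l-v$, $y_r-v$ of the two edges meeting at $v$, i.e.\ $\{s: s^T(y_l-v)\le 0,\ s^T(y_r-v)\le 0\}$. This yields directly the explicit inequality representation the algorithm works with. Unboundedness comes for free, because two homogeneous linear inequalities in $\R^2$ always have a common nonzero solution. The price is a tacit assumption that $v$ is a genuine corner whose tangent cone is spanned by those two tangents.

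You instead use generators, $N_P(v)=\cone\{\nabla g_i(v): i\in I(v)\}$ under a Slater condition. This is exactly the alternative mentioned in the Remark following the proposition (via \cite[Theorem 6.14]{ROCKAFELLAR-98a}). It costs a constraint qualification and a nondegeneracy check. In return it does not depend on the local corner geometry or on how many constraints are active at $v$.

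Your ``main obstacle'' is immediate from your own Slater assumption. Suppose $g_i$ is convex with $g_i(v)=0$ and $\nabla g_i(v)=0$. Then $v$ minimizes $g_i$ globally, so $g_i\ge 0$ everywhere, which contradicts $g_i(\bar x)<0$ at the Slater point $\bar x$. Hence every active gradient is nonzero. Since $v$ lies on the boundary of $P$, some constraint is active, giving $N_P(v)\neq\{0\}$. You can therefore drop the discussion of degenerate zero sets, the appeal to an active linear edge, and the (unused) transversality claim.

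Your sentence on Type~3 regions, however, proves nothing. For nonconvex $P$ the convex subdifferential of $q_p+I_P$ need not equal $\nabla q_p(v)+N_P(v)$. Splitting $P$ into convex pieces turns $\partial f(v)$ into an intersection of conditions, including ones from pieces that do not contain $v$. You never establish that this intersection is polyhedral or unbounded. The paper's proof does not handle that case either; its later Remark replaces a Type~3 piece by its convex hull. So it is cleaner to state and prove the result for convex $P$ only.
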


    \begin{proof}
        Let $v$ be a vertex of $P$ and $E^-$ and $E^+$ be the tangents to the edges at the vertex $v$. Then for all $y \in P$, $y_l \in E^-$ and $y_r \in E^+$, the normal cone at $v$ is 
        \begin{equation}
            \begin{split}
                N_{P}(v) & = \{s \in \R^2 : s^T(y-v) \le 0, \forall y \in P \}, \\  
                          & = \{s_l \in \R^2 : s_l^T(y_l-v) \le 0 \} \cap \{s_r \in \R^2 : s_r^T(y_r-v) \le 0 \}, \\      
                          & = \{s \in \R^2 : s^T(y_l-v) \le 0,  s^T(y_r-v) \le 0 \}, \\      
            \end{split}        
        \end{equation}
        which is an unbounded polyhedral set. Therefore the subdifferential given by
        \begin{equation}
            \begin{split}
                \partial f(v) & = \partial q_p(v) + N_{P}(v), \\  
                              & = \{s + \nabla q_p(v) \in \R^2 : s^T(y_l-v) \le 0,  s^T(y_r-v) \le 0 \}, \\      
            \end{split}        
        \end{equation}
        is also an unbounded polyhedral set. 
    \end{proof}

    \begin{remark}
        The polyhedrality of the normal cone can also be deduced from \cite[Theorem 6.14]{ROCKAFELLAR-98a} and the subsequent note on p. 211 that states that the normal cone is generated by gradients of the active constraints. Since there are a finite number of inequalities and all functions are differentiable, there is a finite number of such gradients, which makes the normal cone polyhedral.
    \end{remark}

    \begin{remark}
    Since linear functions are a subset of parabolic functions, our result generalizes~\cite[Lemma 1]{KUMAR-19}.    
    \end{remark}
    
    \begin{example}
        Consider piece $P_1$ shown in Figure~\ref{fig:ex_max}
        \[ P_1 = \{(x,y) : -x-2y-4 \le 0, x+2y-4 \le 0,  48x - 56y + 4xy + x^2 + 4y^2 - 184 \le 0  \}. \]
        The function $f(x,y)=2x$ is defined on $P_1$. At the vertex $V=(0,0)$, $\nabla q(V) = (2,0)$ and the normal cone is 
        $ N_{P_1}(V)= \{ s : s_2 \le 0, s_1-s_2\le2 \}$ shown in Figure~\ref{fig:piece1SubdiffV1}. The subdifferential is given by $\partial f_{P_1}(V) = \{(2,0) + (s_1,s_2), s_2 \le 0, s_1-s_2\le2\}$.
    \end{example}

    \begin{figure}
        \centering
                 \centering
                 \includegraphics[width=.45\textwidth,trim=150 100 100 350, clip]{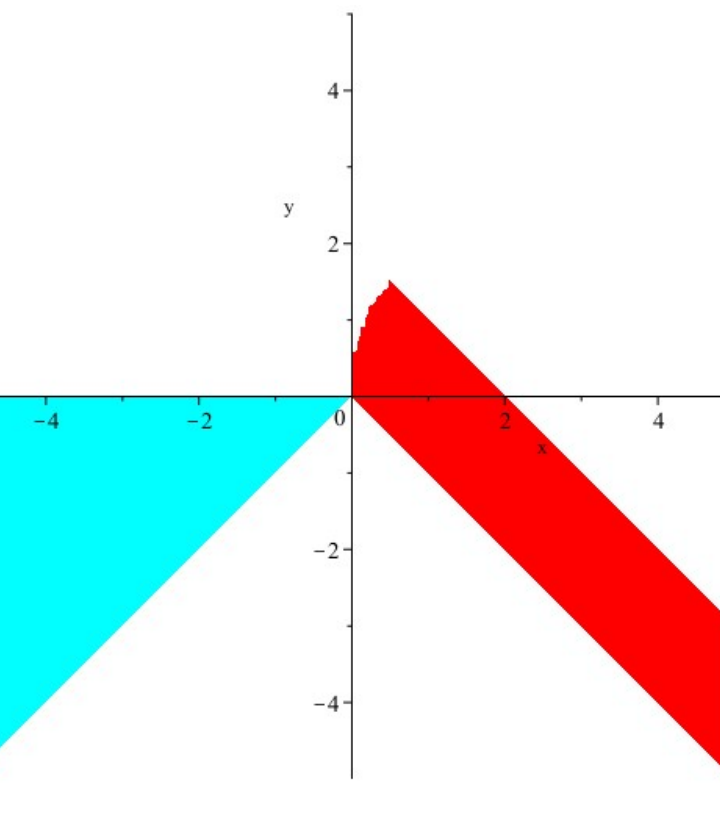}
        \caption{Piece 1 - Vertex 1. Normal Cone at Vertex $v=(0,0)$.}
        \label{fig:piece1SubdiffV1}
    \end{figure}

    Subdifferentials computed at the vertices of a polyhedral regions are given in \cite{KUMAR-19} and can also be considered as a particular case of Proposition~\ref{Prop-2} as a polyhedral division is a special type of parabolic division.
    
    For computing the subdifferentials over the edges, we consider all the cases listed in Table~\ref{table:edges}.

    \begin{table}[tbph]
        \centering
        \caption{Subdiffferential based on edges.
        \label{table:edges}}
        \begin{tabular}{@{}cllcl@{}} \toprule 
        
            Type of Piece & Function & Edge & Lemma & Subdifferential\\\toprule 
            1, 2 & Linear & Linear & \ref{Lemma-1} & Singleton\\ 
            2 & Linear & Parabolic & \ref{Prop-4} & Cone\\ 
            3, 4 & Parabolic & Linear(Parallel) & \ref{Cor-2} & Ray\\ 
            3 & Parabolic & Parabolic & \ref{Cor-3} & Polyhedral region\\ 
            4 & Parabolic & Linear & \ref{Cor-3} & Polyhedral region\\             
             \bottomrule 
        \end{tabular}
    \end{table}

    \begin{lemma}[Linear edge and a linear {function~\cite[Corollary 4.5]{KUMAR-19a}}] \label{Lemma-1}
        Let $l$ be a linear function defined on a linear edge $E = \{x:x_2=mx_1+c,x_{l_1} \le x_1 \le x_{u_1} \}$ ($x_{l_1}$ or $x_{u_1}$ maybe $\infty$). Then $\partial l(E)$ is a singleton.
    \end{lemma}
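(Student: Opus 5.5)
The plan is to write the linear function as $l(x)=\beta^T x+\gamma$ and to read $\partial l(E)$, as in~\cite[Corollary 4.5]{KUMAR-19a}, as the set of slopes produced by $l$ along the relative interior of $E$: $\partial l(E)=\bigcup_{x\in\ri E}\partial l(x)$, with the normal-cone contributions of the region bookkept separately (through Proposition~\ref{Prop-2} at the vertices). Under that reading the proof is short; at the end I also handle the alternative reading $f=l+I_P$.

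\textbf{Step 1 (gradient part).} Since $l$ is affine and finite on all of $\R^2$, it is differentiable everywhere with $\nabla l(x)=\beta$ for every $x$. In particular $\partial l(x)=\{\beta\}$ for every $x\in E$, whether $E$ is a segment, a ray or a line. Hence
\[
\partial l(E)=\bigcup_{x\in E}\{\beta\}=\{\beta\},
\]
which is a singleton. Note that the parametrization $x_2=mx_1+c$ with $x_{l_1}\le x_1\le x_{u_1}$ plays no role in this computation. Allowing $x_{l_1}$ or $x_{u_1}$ to be infinite therefore changes nothing.

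\textbf{Step 2 (robustness with the normal cone).} I would also treat the reading $f=l+I_P$, with $E$ an edge of the polyhedral piece $P$. At any $x\in\ri E$, the active constraint is the single linear inequality defining $E$, with normal vector $(-m,1)$ up to sign. By~\cite[Theorem 6.14]{ROCKAFELLAR-98a}, as recalled in the remark after Proposition~\ref{Prop-2}, $N_P(x)$ is the ray generated by this outward normal, and it does not depend on the choice of $x\in\ri E$. Consequently $\partial f(x)=\beta+N_P(x)$ is the same set at every relative interior point of $E$. The map $x\mapsto\partial f(x)$ is therefore constant on $\ri E$, and this edge contributes exactly one object to the subdivision of $\dom f^*$: its image collapses to the single slope $\beta$ translated by a fixed normal ray. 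The contrast is with a parabolic edge (Proposition~\ref{Prop-4}), where the normal direction rotates along the edge and the image is a cone.

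\textbf{Main obstacle.} There is no computational difficulty here. The only delicate point is fixing the convention in the statement: a singleton for the pure gradient part, versus a translate of one fixed normal ray for $l+I_P$. I would state explicitly that, under the paper's convention, the normal-cone contribution of $E$ is handled together with the adjacent vertex and edge entities. I would then cite~\cite[Corollary 4.5]{KUMAR-19a} for consistency with the polyhedral case.
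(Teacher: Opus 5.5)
Your Step 1 is a complete and correct proof of the statement as written. Since $l(x)=\beta^Tx+\gamma$ is differentiable on all of $\R^2$ with $\nabla l\equiv\beta$, you get $\partial l(x)=\{\beta\}$ at every point of $E$. Hence $\partial l(E)=\{\beta\}$, whether $E$ is a segment, a ray or a line. The paper gives no argument of its own; it imports the result from \cite[Corollary 4.5]{KUMAR-19a}. Your route differs only in being self-contained, and nothing more is needed.

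Do reframe Step 2, though. It does not show the lemma is robust to the reading $f=l+I_P$; it shows the lemma is false under that reading. Under that reading:
\begin{itemize}
\item for $x\in\ri E$ you get $\partial f(x)=\beta+N_P(x)$, a translated ray;
\item at a finite endpoint of $E$ you get a two-dimensional cone (Proposition~\ref{Prop-2});
\item for $f=l+I_E$ you even get the line $\beta+\R(-m,1)$.
\end{itemize}
Saying the edge ``contributes exactly one object'' replaces ``$\partial l(E)$ is a singleton'' by ``$x\mapsto\partial f(x)$ is constant on $\ri E$''. That is a different statement.

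The honest conclusion of Step 2 is that Lemma~\ref{Lemma-1} must be read as a statement about $\nabla l$ alone, with normal-cone contributions handled by the vertex and adjacent entities. This matters because of Corollary~\ref{Cor-2}, the analogous case of a parabolic function whose gradient is constant along a linear edge. There the paper does add $N_P(x)$ and obtains a ray. So the ``Singleton'' entry in Table~\ref{table:edges} is consistent only under your Step~1 reading.
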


    \begin{proposition}[Linear edge and a parabolic function. ] \label{Prop-3}
        Take $q_p$ as in \eqref{eq:qp},  $P\in\PR$ a parabolic region, and $L = \{x:x_2-mx_1-c=0,x_{l_1} \le x_1 \le x_{u_1}\}$ a linear edge. Then for any $x\in L$ and $f(x) = q_p(x) + I_{P} (x)$, $\partial f(x) = \{ \Bar{s} + \hat{s}:  b\Bar{s_1}-2a\Bar{s_2}- bd+ 2ae=0, \hat{s_1}=-m\hat{s_2}, \hat{s_2} \ge 0\}$ is a polyhedral region or a ray.
    \end{proposition}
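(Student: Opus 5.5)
The plan is to follow the template of the proof of Proposition~\ref{Prop-2} and split the subdifferential with the sum rule. Since $q_p$ is finite, convex and differentiable on all of $\R^2$, and $I_P$ is proper, closed and convex whenever the relevant part of $P$ near $L$ is convex, we have
\[
\partial f(x) = \nabla q_p(x) + N_P(x), \qquad x\in L .
\]
The statement writes the generic element as $\Bar{s}+\hat{s}$, so the proof has two parts. We identify where $\Bar{s}=\nabla q_p(x)$ can lie, and we describe $\hat{s}\in N_P(x)$.

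\textbf{The gradient term $\Bar{s}$.} We reuse the computation of Proposition~\ref{Prop-1}. Writing $\Bar{s}_1=2ax_1+bx_2+d$ and $\Bar{s}_2=bx_1+2cx_2+e$, the combination $b\Bar{s}_1-2a\Bar{s}_2-bd+2ae$ equals $(b^2-4ac)x_2=0$. So every $\Bar{s}$ lies on the fixed line $\{b s_1-2as_2-bd+2ae=0\}$. Moreover, restricting to $x\in L$ means substituting $x_2=mx_1+c$, so $\Bar{s}$ depends affinely on $x_1$. Hence as $x$ ranges over $L$, $\Bar{s}$ traces a segment, ray, or line inside that line, or a single point in the degenerate case where $L$ is parallel to the kernel direction of the Hessian.

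\textbf{The normal-cone term $\hat{s}$.} At a relative interior point $x$ of the linear edge $L$, the only active constraint of $P$ is the linear one $x_2-mx_1-c\le 0$; the sign convention is chosen so that $P$ lies on that side. By the remark after Proposition~\ref{Prop-2} (\cite[Theorem 6.14]{ROCKAFELLAR-98a}), $N_P(x)$ is generated by the gradient $(-m,1)$ of this active constraint. Therefore
\[
N_P(x)=\{\hat{s}:\hat{s}=\hat{s}_2(-m,1),\ \hat{s}_2\ge 0\},
\]
which is exactly the condition $\hat{s}_1=-m\hat{s}_2$, $\hat{s}_2\ge 0$. If $P$ lies on the opposite side, we replace $\hat{s}_2\ge0$ by $\hat{s}_2\le0$, or equivalently rewrite the constraint.

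\textbf{Conclusion.} Combining the two parts, for fixed $x$ the set $\partial f(x)$ is a point plus a ray, so it is a ray. Taking the union over $x\in L$ gives a Minkowski sum of a segment or line with a ray, which is a polyhedral region. When that segment degenerates to a point, or when the ray direction $(-m,1)$ is parallel to the line $b s_1-2as_2=bd-2ae$, the union is again a ray or a strip/half-plane. Either way the result is polyhedral, as claimed.

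\textbf{Main obstacle.} The delicate part is justifying the normal-cone formula, since $P$ is a parabolic region that may be nonconvex, as in Type 3. The plan is to argue locally: at a relative interior point of a linear edge, the parabolic constraints are inactive. So $P$ coincides near $x$ with a halfplane, and the local normal cone is the one given above. Vertices are excluded here because they are handled by Proposition~\ref{Prop-2}.
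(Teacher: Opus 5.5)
Your proposal is correct and is essentially the paper's proof: for $x\in\ri(L)$ you write $\partial f(x)=\nabla q_p(x)+N_P(x)$, use the computation of Proposition~\ref{Prop-1} to place $\nabla q_p(x)$ on the line $b s_1-2as_2-bd+2ae=0$, and identify $N_P(x)$ as the ray $\cone\{(-m,1)\}$. Your extra bookkeeping sharpens what the paper writes, since for fixed $x$ you get the ray $\nabla q_p(x)+\cone\{(-m,1)\}$, which is contained in but not equal to the displayed set; note also that your two degenerate cases are the same condition $m=-2a/b$, in which case the union over $\ri(L)$ is a single ray (precisely the Type~3 situation of Corollary~\ref{Cor-2}) rather than a strip or half-plane.
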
 
            
    \begin{proof}
        For $x \in  \ri (L)$, 
        \begin{equation}
            \begin{split}
            N_P(x) %
                   =  &\{\hat{s}:\hat{s}=\lambda (-m,1)), \lambda \ge 0\},   \\
                   =  &\{\hat{s}:\hat{s_1}=-ms_2, \hat{s_2} \ge 0\}.  
            \end{split}
        \end{equation}
       From Proposition~\ref{Prop-1} we have 
       \[\bigcup_{x\in\dom q_p} \partial q_p(x) \subseteq \{\Bar{s}:b\Bar{s_1}-2a\Bar{s_2}- bd+ 2ae=0\}.\]  

        Now for all $x \in  \ri (L)$, 
         $\partial f(x) = \partial q_p(x) + N_P(x)$, so
         \[ f(x) = \{ \Bar{s} + \hat{s}:  b\Bar{s_1}-2a\Bar{s_2}- bd+ 2ae=0, \hat{s_1}=-m\hat{s_2}, \hat{s_2} \ge 0\},\]
         which concludes the proof.
    \end{proof}
    
    \begin{corollary} \label{Cor-2}
        For a parabolic function $q_p$ restricted to a linear edge $L$ of a piece of Type 3 $P$, $\partial f(x)$ is contained in a ray.
    \end{corollary}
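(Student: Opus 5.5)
The plan is to specialize Proposition~\ref{Prop-3} to the geometry of a Type~3 piece. There, for $x\in\ri(L)$, we have $\partial f(x)=\nabla q_p(x)+N_P(x)$. Since $q_p$ is differentiable, $\partial q_p(x)=\{\nabla q_p(x)\}$ is a single point, not the whole line $\{\bar s: b\bar s_1-2a\bar s_2-bd+2ae=0\}$ given by Proposition~\ref{Prop-1}. The normal cone to $P$ at a relative interior point of the linear edge $L=\{x_2=mx_1+c\}$ is the half-line $\{\lambda(-m,1):\lambda\ge 0\}$, or its negative, depending on which side of $L$ the region $P$ lies. Hence for each fixed $x\in\ri(L)$, $\partial f(x)$ is the ray $\nabla q_p(x)+\R_+(-m,1)$ (up to orientation). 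The statement, read pointwise, follows directly.

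For the stronger reading, that the union over the edge also lies in a ray, I would use the defining feature of a Type~3 piece: its two linear edges are parallel. For a parabolic function, the natural parallel lines are those along which the direction of degeneracy of $q_p$ runs. Write $q_p(x)=(\sqrt a\,x_1+\operatorname{sign}(b)\sqrt c\,x_2)^2+dx_1+ex_2+f$. The Hessian has kernel spanned by $u=(2c,-b)$, or equivalently $(b,-2a)$. I would show that in a Type~3 piece the linear edges are parallel to $u$, so that $m=-b/(2c)$. This is the structural fact coming from the construction in Section~\ref{s:struct}: the region is a union of rays swept in the kernel direction.

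With $L$ parallel to $u$, $\nabla q_p$ restricted to $L$ is affine in the parameter $t$ with derivative $\nabla^2 q_p\,u=0$. So $\nabla q_p$ is constant on $L$, say equal to $g$. Then $\bigcup_{x\in\ri L}\partial f(x)=g+N$, where $N$ is the half-line normal to $L$, and this is a single ray. If $L$ has endpoints, those are vertices covered by Proposition~\ref{Prop-2}, so only relative interior points are considered.

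The main obstacle is justifying that the parallel linear edges of a Type~3 piece are aligned with the kernel direction of $\nabla^2 q_p$, because Section~\ref{s:struct} describes Type~3 only informally. I would argue it from how Type~3 pieces arise, namely as conjugates of rational pieces where the maximum merges rays on which $f^*$ is parabolic. If that argument is too loose, I would fall back on the pointwise statement, which needs only the computation in the first paragraph.
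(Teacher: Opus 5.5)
Your proof is correct, but it reaches the conclusion by a different mechanism than the paper's.

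\textbf{The paper's route.} It substitutes the concrete Type~3 data into Proposition~\ref{Prop-3}. These are the edge slope $m_1=-1/m$, inherited from the normal to the primal convex edge $y=mx+q$, and the Appendix coefficients $a=1/(4m)$, $b=1/2$, $c=m/4$. In Proposition~\ref{Prop-3} the first summand is the whole gradient line of Proposition~\ref{Prop-1}, not the single point $\nabla q_p(x)$. The substitution shows that the normal half-line to $L$ is parallel to that gradient line. Hence everything lies in one line, namely the primal edge $s_2=ms_1+q$. Strictly speaking, a line plus a parallel half-line is the line itself, so that computation only certifies containment in a line.

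\textbf{Your route.} You use $\partial q_p(x)=\{\nabla q_p(x)\}$. This gives the exact ray $\nabla q_p(x)+N_P(x)$ at each $x\in\ri L$, and shows the pointwise claim needs no Type~3 structure at all. For the union over $L$, you use the other face of the same structural fact. Since $L\parallel\ker\nabla^2 q_p$, the gradient $\nabla q_p$ is constant on $L$, so the union is a single ray. This is equivalent to the paper's observation that the normal to $L$ is parallel to the range of $\nabla q_p$.

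\textbf{Comparison.} Your version is coordinate-free and sharper. The paper's version identifies the carrying line as the primal convex edge, which is what the biconjugate step uses.

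\textbf{Your flagged obstacle.} The alignment you need is exactly what the paper supplies. With $b=1/2$ and $c=m/4$, your kernel vector $(2c,-b)\propto(m,-1)$ has slope $-1/m=m_1$. So combining the paper's edge-slope claim with the Appendix formulas closes your argument.

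\textbf{Nonconvexity.} Both you and the paper leave one point implicit: a Type~3 region is nonconvex, so the sum rule $\partial f(x)=\nabla q_p(x)+N_P(x)$ needs a justification. It does hold here.
\begin{itemize}
\item For the inclusion $\supseteq$: $P$ lies in the strip between the two parallel lines, so the outward normal half-line at $x\in\ri L$ is a global normal. Convexity of $q_p$ on all of $\R^2$ then gives the subgradient inequality.
\item For the inclusion $\subseteq$: this only uses that $P$ coincides with a half-plane near $x$.
\end{itemize}
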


    \begin{proof}
        We get pieces of Type 3 by taking the conjugate of a rational function restricted to a convex edge. The linear edges are parallel to the normal cone obtained at the convex edge. The slope of a parallel edge of $f^*$ satisfies $m_1=-1/m$, where m is the slope of the convex edge of $f$ and $m_1$ is the slope of the edge L. The parabolic function computed for this type is given by $q_p = ax^2+bxy+cy^2+dx+ey+f$, where
        $a = 1/(4m)$,
        $b = 1/2$,
        $c = m/4$,
        $d = -q/(2m)$,
        $e = q/2$,
        $f = q^2/(4m)$; see Appendix ~\ref{s:conjugate_formulae}.
        Substituting the value for $m_1$, and the values for the coefficients in $\partial f(x)$ obtained in Proposition~\ref{Prop-3}, we get 
        \[\partial f(x) = \{ \Bar{s} + \hat{s}:  m\Bar{s_1}-\Bar{s_2} + q/2m=0, \hat{s_2}=m\hat{s_1}, \hat{s_2} \ge 0\}.\]     
        This gives us a ray.
    \end{proof}
    
    \begin{example}
        Consider piece $P_5$ shown in Figure~\ref{fig:ex_max}
        \[ P_5 = \{(x,y) : -x-2y-4 \le 0, x+2y-4 \le 0,  -48x + 56y - 4xy - x^2 - 4y^2 + 184 \le 0  \}. \]
        The function $f(x_1,x_2)=(1/4)(x_1+x_2)^2$ is defined on this region. At the edge, $x+y=0$ which is the ray from  vertex, $V_1(0,0)$, $\nabla f(x_1,x_2) = (1/2)(s_1 - s_2)$ and the subdifferential is contained in the ray $\{ s :  s_2 - s_1 = 0, s_2 \le 0\}$.
    \end{example}

    \begin{corollary} \label{Cor-3}
        For a parabolic function $q_p$ restricted to a linear edge (not parallel) of a piece of Type 4, $\partial f(x)$ is an unbounded polyhedral region.
    \end{corollary}

    \begin{example}
        Refering to the purple piece in Figure~\ref{fig:quadPoly}, corresponding to the edge between the purple and green piece we get the yellow region as shown in the subdifferential in Figure~\ref{fig:quadPoly}.
    \end{example}

    \begin{remark}
         The pieces of ``Type 3'' are nonconvex, joining the two vertices with a line gives us its convex envelope. We compute the normal cone of the convex envelope in order to compute the domain of the conjugate. We use Corollary~\ref{Cor-3} to compute the normal cone.
    \end{remark}

    \begin{proposition}[Parabolic edge and linear function] \label{Prop-4}
        For a linear function $l$, restricted to the parabolic edge $C(x) = \{x:ax_1^2+bx_1x_2+cx_2^2+dx_1+ex_2+f, b^2-4ac=0, x_{l_1} \le x_1 \le x_{u_1} $\}, then $\partial f(E)$ is a cone with vertex at $\nabla l(x)$.
    \end{proposition}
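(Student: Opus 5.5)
The plan is to use the sum rule for subdifferentials, exactly as in Propositions~\ref{Prop-2} and~\ref{Prop-3}. Write $f = l + I_P$, where $P$ is the Type~2 region whose boundary contains the parabolic edge $E$ (the arc of $C$). By Lemma~\ref{Lemma-1} and Proposition~\ref{Prop-1}, $\partial l(x)=\{\nabla l\}$ is a constant vector for every $x$. Since $l$ is finite and continuous everywhere, the sum rule gives
\[
\partial f(x)=\nabla l+N_P(x)\quad\text{for every } x\in E,
\]
and therefore $\partial f(E)=\bigcup_{x\in E}\partial f(x)=\nabla l+\bigcup_{x\in E}N_P(x)$. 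This reduces the claim to showing that the union of the normal cones of $P$ along the arc is a cone with apex at the origin.

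For a point $x$ in the relative interior of the arc, the only active constraint is the parabolic inequality $g(x)\le 0$, with $g(x)=ax_1^2+bx_1x_2+cx_2^2+dx_1+ex_2+f$. By the remark after Proposition~\ref{Prop-2} (normal cones are generated by the gradients of active constraints), we get $N_P(x)=\{\lambda\nabla g(x):\lambda\ge 0\}$, which is a ray from the origin. This step needs the constraint qualification $\nabla g(x)\neq 0$ on the edge. I would justify it from the parabola structure: since $b^2-4ac=0$, the map $x\mapsto \nabla g(x)$ is affine with rank one. It vanishes only on a line, and that happens only in degenerate cases ($(d,e)$ parallel to $(2a,b)$, i.e.\ when the "parabola" is really a line or a pair of lines), which we exclude because the edge is genuinely parabolic.

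Next I would describe $\bigcup_{x\in E}\{\lambda\nabla g(x)\}$. Parametrize the arc by $x_1\in[x_{l_1},x_{u_1}]$; the direction of $\nabla g(x)$ turns monotonically along the arc because the curvature of a parabola has constant sign. Hence the union of these rays is $\cone\{\nabla g(x_l),\nabla g(x_u)\}$, the convex cone between the normals at the two endpoints. This cone is pointed because, on the convex side of a parabola, the normal directions span an angle strictly less than $\pi$; on an unbounded arc the angle is at most $\pi$, with the limit direction taken as the axis direction. The two endpoint vertices contribute $N_P(v)$, which by Proposition~\ref{Prop-2} is polyhedral and contains the limiting ray, so including them only adds the adjacent linear-edge normals; this is still a cone with apex $0$. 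Translating by $\nabla l$ then gives a cone with vertex $\nabla l(x)$.

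The main obstacle is the monotonicity/angle argument: showing that the union of rays is exactly a convex cone rather than something non-convex. It matters especially when an endpoint is at infinity ($x_{u_1}=\infty$), where the normals converge to the axis direction and the cone may fail to be closed. I would handle this by an explicit computation. Since $b^2-4ac=0$, rotate coordinates so that $g$ becomes $\kappa t_1^2+\beta t_2+\gamma$. Then $\nabla g=(2\kappa t_1,\beta)$ in the rotated frame, whose directions sweep monotonically as $t_1$ varies over an interval. This gives the cone explicitly and makes its closure issue at infinity transparent.
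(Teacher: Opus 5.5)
Your proposal is correct and follows essentially the same route as the paper. Both use the sum rule $\partial f(x)=\nabla l+N_P(x)$, take the normal cone at a relative-interior point of the arc to be the ray generated by the gradient of the parabolic constraint, and identify the union over the arc as the cone between the two endpoint normals (the paper's rays $s_2=k_l s_1$ and $s_2=k_r s_1$). Your extra steps fill in what the paper compresses into ``solving and substituting'': the constraint qualification $\nabla g\neq 0$ derived from $b^2-4ac=0$, and the rotated-coordinate argument that the normal direction sweeps monotonically, so the union is exactly $\cone\{\nabla g(x_l),\nabla g(x_u)\}$. That cone description also avoids the sign and vertical-slope ambiguities in the paper's slope inequalities.
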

        
    \begin{proof}
    
        Let $g(x)$ be the restriction of a linear function to the parabolic curve joining $x_l$ and $x_u$ be given by
        \begin{equation}
             C(x)=ax_1^2+bx_1x_2+cx_2^2+dx_1+ex_2+f, b^2-4ac=0.  
             \label{Pcurve}
        \end{equation}
        For $s\in \bigcup_{x\in\dom g} \partial g(x)$, $s = \lambda \nabla C(x), \lambda \ge 0.$
        This gives us 
        \begin{equation}
            s_1 =  \lambda(2ax_1+bx_2+d),
            s_2 =  \lambda(bx_1+2cx_2+e).
            \label{grads}
        \end{equation}
        
        Solving equations \eqref{Pcurve} and \eqref{grads} for $x_1$, $x_2$ and $\lambda$ and substituting, we get the normal cone between rays, $s_2 = k_l s_1$ and $s_2 = k_r s_1$, where $k_l=(e + bx_l + 2cy_l)/(d + 2ax_l + by_l)$ and $k_r = (e + bx_r + 2cy_r)(d + 2ax_r + by_r)$.
        For all $x \in ri(C)$, $\partial f(x) = \partial g(x) + N_P(x)$, thus
        \[\partial f(x) = \{\nabla g(x) + (s_1,s_2), k_ls_1 \le s_2 \le k_rs_1\}.\]
    
    \end{proof}
    
    \begin{example}
        Consider piece $P_1$ shown in Figure~\ref{fig:ex_max}
        \[ P_1 = \{(x,y) : -x-2y-4 \le 0, x+2y-4 \le 0,  48x - 56y + 4xy + x^2 + 4y^2 - 184 \le 0  \}. \]
        The function $f(x,y)=2x$ is defined on $P_1$. At the edge between vertex $V_1(0,0)$ and vertex $V_2(0.5,1.5)$, $\nabla l(v) = (2,0)$ and the subdifferential is described by $-s_2 \le 0$ and $s_1+s_2-2\le0$ as shown in Figure~\ref{fig:SubdiffE12}.
    \end{example}

    \begin{figure}
        \centering
        \includegraphics[width=.45\textwidth,trim=100 200 100 200, clip]{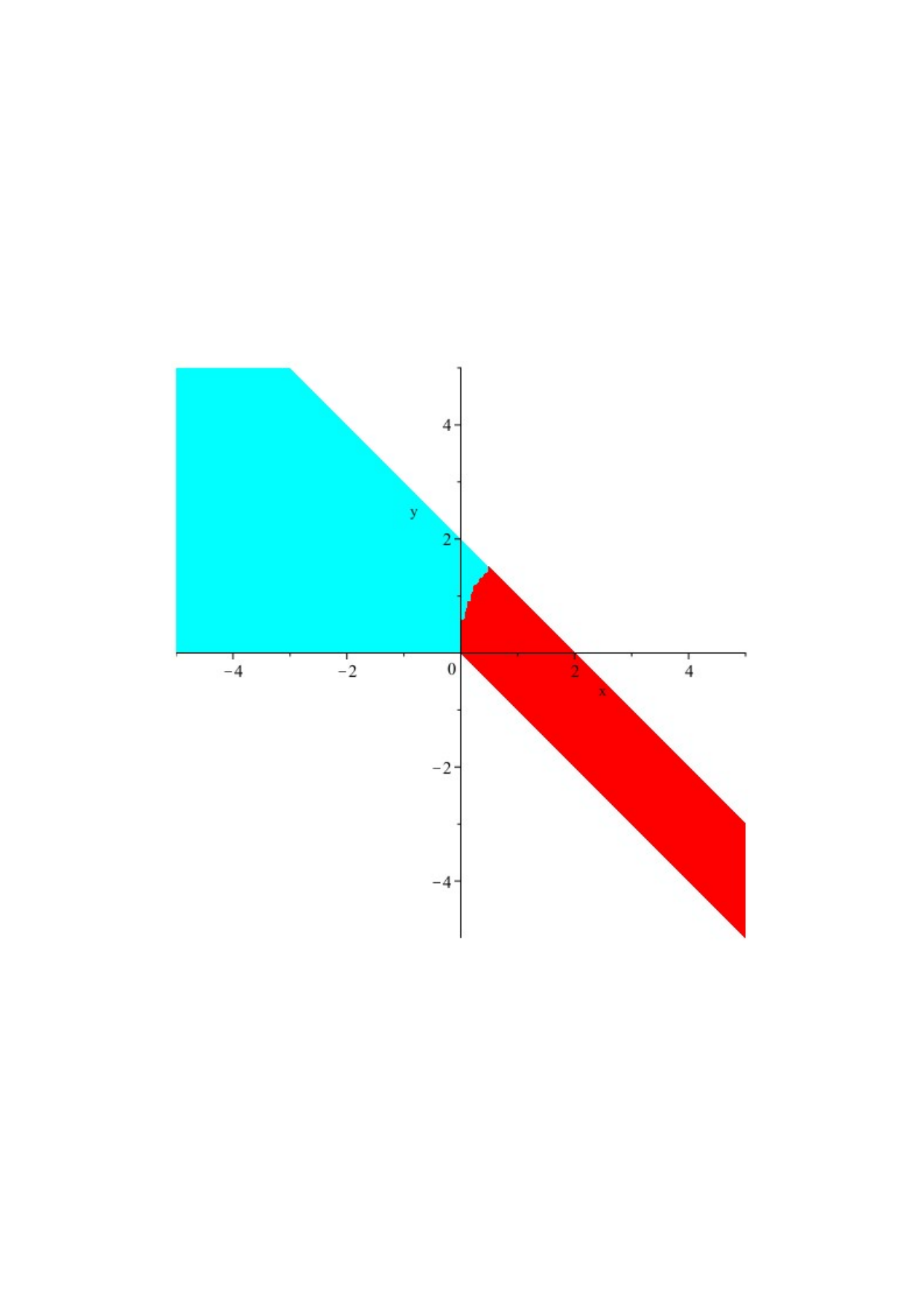}
        \caption{Subdifferential of $f=2x$ on the parabolic edge of $P_1$.}
        \centering
        \label{fig:SubdiffE12}
    \end{figure}

    Locatelli~\cite{LOCATELLI-16} showed that $\co f$ is a piecewise function over a polyhedral subdivision that can be linear, quadratic, or rational. By gathering Propositions~\ref{Prop-1}, \ref{Prop-2}, \ref{Prop-3} and  \ref{Prop-4}, Corollaries \ref{Cor-1}, \ref{Cor-2} and \ref{Cor-3} and Lemma  \ref{Lemma-1}  we get the same theorem with a constructive proof.
    
    \begin{theorem}[Polyhedral domain] \label{Thm-1}
        For a piecewise quadratic-linear function, the domain of the biconjugate has a polyhedral subdivision. 
    \end{theorem}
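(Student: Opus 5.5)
The plan is to describe $\dom f^{**}$ through the conjugate. By the structure result of Section~\ref{s:struct}, $f^*$ is a piecewise function on a parabolic subdivision of $\dom f^*$ whose pieces are of Types 1--4. We also have $f^{**}=(f^*)^*$, and $\overline{\dom}\, f^{**}=\overline{\ran}\,\partial f^*$ by \cite[Remark 18.16]{BAUSCHKE-24}. So it suffices to show two things: $\ran\partial f^*$ is a finite union of polyhedral sets, and these sets meet only along vertices or edges.

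To compute $\ran\partial f^*$ we decompose $\dom f^*$ into its entities: face interiors, edges and vertices. We treat each entity of each piece $(q,P)$ separately.
\begin{enumerate}
\item On the interior of a piece, Corollary~\ref{Cor-1} (resting on Proposition~\ref{Prop-1}) places $\bigcup_{x\in\inte P}\partial f^*(x)$ inside a line. Because $q$ is quadratic, this image is the affine image of a convex set, hence a segment, a ray or a line. For a linear piece the image is a single point.
\item At a vertex, Proposition~\ref{Prop-2} shows that $\partial f^*(v)$ is a polyhedral set.
\item On edges we follow Table~\ref{table:edges}. For a linear function on a linear edge, Lemma~\ref{Lemma-1} gives a singleton. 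For a linear function on a parabolic edge, Proposition~\ref{Prop-4} gives a polyhedral cone. For a parabolic function on a linear edge, Proposition~\ref{Prop-3} and Corollaries~\ref{Cor-2} and~\ref{Cor-3} give a ray or a polyhedral region.
\item Type 3 pieces are nonconvex. As in the remark following Corollary~\ref{Cor-3}, we replace each one by its convex envelope, obtained by joining its two vertices, so that the normal-cone computation stays polyhedral.
\end{enumerate}
Since $\partial f^*$ is a maximal monotone operator, $\ran\partial f^*$ is the finite union of all these subdifferential images. Each image is polyhedral, so the union is a finite union of polyhedral sets.

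Next we verify the subdivision property. Take two distinct entities $E_1,E_2$. If a point $s$ lies in both $\partial f^*(x_1)$ and $\partial f^*(x_2)$, then $x_1,x_2\in\partial f^{**}(s)$, and by convexity the whole segment $[x_1,x_2]$ lies in $\partial f^{**}(s)$. We will argue that this forces the intersection of the two images to lie in a common boundary. For example, $\partial f^*(v)$ and the cone over an adjacent edge share the ray generated by the limiting normal $k_l$ or $k_r$ of Proposition~\ref{Prop-4}. Two-dimensional images therefore meet only along rays or segments, and lower-dimensional images sit on boundaries of higher-dimensional ones. Where needed, we refine the pieces by intersecting them with these lines. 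Because only finitely many lines are involved, this refinement produces a genuine polyhedral subdivision in the sense of~\cite[Definition 1]{PATRINOS-11}. Finally, $\dom f^{**}$ is convex and lies between $\ran\partial f^*$ and its closure. The closure is a finite union of closed polyhedra, and so it is itself polyhedral.

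The main obstacle is the edge analysis for the curved pieces. Two cases need care: the parabolic-edge case of Proposition~\ref{Prop-4}, and the claim that the normal cones along a parabolic edge sweep out exactly a polyhedral cone rather than a curved region. The underlying reason, which we will make precise, is that along a parabolic curve the normals $\nabla C(x)$ rotate monotonically between the end directions $k_l$ and $k_r$. Scaling by $\lambda\ge0$ then fills the cone between these two rays. This works because the image of a parabolic curve under a linear function's subdifferential is translated only by $\nabla l$, a constant. A second delicate point is ensuring that no two images overlap on a set of positive area. For this we rely on the fact that $f^*$ is differentiable nowhere except on the listed entities, together with monotonicity of $\partial f^*$.
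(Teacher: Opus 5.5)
Your entity-by-entity plan is the paper's route: the paper proves Theorem~\ref{Thm-1} only by collecting Propositions~\ref{Prop-1}--\ref{Prop-4}, Corollaries~\ref{Cor-1}--\ref{Cor-3} and Lemma~\ref{Lemma-1}. The genuine gap is the gluing step you add, namely the claim that $\ran\partial f^*$ is the union of the images in your items 1--4. Those results describe $\partial(q+I_P)$ for one piece in isolation, not $\partial f^*$. (Maximal monotonicity plays no role; the range of any operator is the union of its values.)

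With the paper's triangular pieces, $\dom f$ is bounded, so $f^*$ is finite and convex on $\R^2$. Then \cite[Theorem~25.6]{ROCKAFELLAR-70a} gives $\partial f^*(s)=\co\{\nabla g_i(s): s\in P_i\}$, where $g_i$ is the expression of $f^*$ on $P_i$. At a vertex this is a polygon. Along an edge where $f^*$ switches from $g_1$ to $g_2$ it is the segment $[\nabla g_1(s),\nabla g_2(s)]$. So the contributions of adjacent pieces constrain each other; they do not add up.

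Your union is therefore far too large. Every per-piece vertex image $\nabla q(v)+N_P(v)$ from Proposition~\ref{Prop-2} is unbounded, whereas $\dom f^{**}=\co\dom f$ is a polytope. The subdivision step inherits this error. Per-piece images of different entities overlap in sets of positive area and carry different expressions there; this is exactly why the paper takes a maximum of per-piece conjugates. Nothing you say shows that the curves where those expressions switch are lines, and a priori they are conics or cubics. So your promised argument that shared points lie on common boundaries cannot work for these sets, and refining by finitely many lines does not give cells carrying a single expression.

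What is needed is an analysis of the true cells $\partial f^*(E)=\bigcup_{s\in E}\partial f^*(s)$. Vertex cells are polygons. An edge cell is $\bigcup_{s\in E}[\nabla g_1(s),\nabla g_2(s)]$, and proving it polyhedral is the real content. The per-piece cone of Proposition~\ref{Prop-4}, which you single out as the main obstacle, is not the relevant object.

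\begin{itemize}
\item Parabolic edge between a Type~2 piece ($g_1=l$) and a Type~3 piece ($g_2=q_p$): $\nabla l$ is constant and $\nabla q_p(E)$ is a segment on the line of Proposition~\ref{Prop-1}. Hence the cell is the triangle $\co(\{\nabla l\}\cup\nabla q_p(E))$. In Example~\ref{ex1} this is the triangle with vertices $(0,0)$, $(2,0)$, $(1,1)$ carrying $2y^2/(y-x+2)$. By contrast, the per-piece set $\{s_2\ge 0,\ s_1+s_2\le 2\}$ computed after Proposition~\ref{Prop-4} is unbounded.
\item Linear edge $\{L=0\}$: continuity forces $g_1-g_2=L\,M$ with $M$ affine. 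The jump is then $M(s)\nabla L$, and the cell is a possibly degenerate trapezoid.
\end{itemize}

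With the true cells, overlaps are controlled for the right reason. A point lying in the cells of two disjoint entities has two distinct subgradients of $f^{**}$. This happens only on a null set, because the convex function $f^{**}$ is differentiable almost everywhere; no differentiability property of $f^*$ is involved. Refining by the finitely many supporting lines then removes degenerate contacts.

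Finally, your closure step only shows that $\overline{\dom}f^{**}$ is polyhedral. It is boundedness of the pieces that makes $\dom f^{**}=\co\dom f$ itself a polytope. Without boundedness the domain need not be closed. For example, take $f(0,0)=0$, $f(x,1)=x^2$ and $f=+\infty$ elsewhere. Then $f^{**}(x,y)=x^2/y$ for $0<y\le 1$, and $\dom f^{**}=\{(x,y): 0<y\le 1\}\cup\{(0,0)\}$.
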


\section{Biconjugate Expression}\label{s:expr}
    Now that we know $\dom f^{**}$  as a polyhedral subdivision, we turn to the computation of its expression on each entity.
        
    The conjugate expressions we obtain are either linear functions 
     \begin{equation}\label{eq:linear}
     ax+by+c,
     \end{equation}
    parabolic functions 
     \begin{equation}\label{eq:parabolic}
    ax^2+bxy+cy^2+dx+ey+f, a,c > 0, b^2-4ac=0
    \end{equation}
    or a rational functions of the form $\xi_1(x)^2/\xi_2(x) + \xi_0(x)$ where $\xi_1$, $\xi_2$ and $\xi_0$ are linear functions. These rational functions only occur on an edge of the triangle where the function restricted to that edge is convex. (That edge cannot be vertical because then the function $f(x)=x_1\cdot x_2=0$.)
    Let the convex edge be $y=mx+q$ and the vertex opposite it be $(x_1,y_1)$, then the rational expression can be written as
    \begin{equation} \label{eq:rational}
    r(x,y)=\frac{ax^2+bxy+cy^2+dx+ey+f}{gx+hy+k},
    \end{equation}
    where $a=-my_1$, $b=q$, $c=x_1$, $d=-qy_1+mx_1y_1$, $e=-qx_1-x_1y_1$, $f=qx_1y_1$, $g=-m$, $h=1$, and $k=-y_1+ m x_1$.
    
     Note that \eqref{eq:parabolic} and \eqref{eq:linear} are special cases of \eqref{eq:rational}.

    \begin{table}[tbph]
            \centering
            \caption{Conjugate expressions and domain types for each piece type with associated cases as indicated in the proof of Theorem~\ref{t:bi}.
            \label{table:expr}}
            \begin{tabular}{c|l|l|l|l|c} \toprule 
                \bottomrule 
                Type of Piece & Function & Vertex & Domain & Expression & Case \\\toprule 
                1, 2, 3, 4 & Linear/Parabolic & Linear & Polyhedral region & Linear &  1\\ 
                \bottomrule 
                \bottomrule 
                Type of Piece & Function & Edge & Domain & Expression & Case \\\toprule 
                2 & Linear & Parabolic & Cone & Rational function &  3\\ 
                3 & Parabolic & Parabolic &Polyhedral region & Rational function &  2\\ 
                4 & Parabolic & Linear & Polyhedral region & Parabolic function &  4\\

             \bottomrule 
             
        \end{tabular}
    \end{table}

    \begin{theorem}\label{t:bi}
        For a PLQ function, the biconjugate has a polyhedral subdivision with a rational function of the form given in \eqref{eq:rational}.
    \end{theorem}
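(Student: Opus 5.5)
We combine Theorem~\ref{Thm-1} with a piece-by-piece computation of the conjugate of $f^*$, organized along the cases of Table~\ref{table:expr}. We start from the representation $f^{**}=(\sup_i g_i)^*$, where $g_i$ are the pieces of $f^*$ of Types 1--4 described in Section~\ref{s:struct}. Each $g_i$ is either linear or parabolic, and its domain is either polyhedral or bounded by a parabola and two parallel lines. By Theorem~\ref{Thm-1}, $\dom f^{**}$ carries a polyhedral subdivision. Its cells are exactly the sets $\partial f^*(s)$ as $s$ ranges over the vertices, edges and faces of the parabolic subdivision of $\dom f^*$. On each such cell, $f^{**}(x)=s^Tx-f^*(s)$, where $s$ is any point with $x\in\partial f^*(s)$. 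So it suffices to show that on every cell this expression has the form \eqref{eq:rational}; linear and parabolic functions are special cases of it.

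The cases follow Table~\ref{table:expr}.

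\textbf{Case 1 (vertex $v$).} By Proposition~\ref{Prop-2}, $\partial f^*(v)$ is polyhedral. On it, $f^{**}(x)=v^Tx-f^*(v)$, which is linear.

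\textbf{Interior points.} By Corollary~\ref{Cor-1}, the subgradients lie in a line. Hence these points contribute only lower-dimensional sets, already accounted for by the neighbouring cells, or give linear expressions.

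\textbf{Case 4 (Type 4, parabolic function on a linear edge $y_2=my_1+c$).} We use Proposition~\ref{Prop-3} and parametrize the edge by $t$. For $x\in\partial f^*(s(t))$, the optimality condition $x-\nabla q_p(s(t))\in N(s(t))$ is linear in $t$ along the edge. Solving it gives $t$ as a linear function of $x$. Substituting into $s(t)^Tx-q_p(s(t))$ then gives a quadratic in $x$, that is, a parabolic function.

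\textbf{Case 3 (Type 2, linear function $l$ on a parabolic edge).} We parametrize the parabola rationally and polynomially, as $s(t)=(\alpha t^2+\beta t+\gamma,\ \delta t^2+\epsilon t+\zeta)$ after rotation. By Proposition~\ref{Prop-4}, $x\in\partial f^*(s(t))$ means $x-\nabla l=\lambda n(t)$, where the normal $n(t)$ is affine in $t$. Eliminating $\lambda$ expresses $t$ as a ratio of two linear functions of $x$. Substituting into $s(t)^Tx-l(s(t))$ gives a priori a ratio of degree-$2$ numerator over a squared linear denominator. We then need to check that one factor cancels, leaving the form $\xi_1^2/\xi_2+\xi_0$. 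The cancellation should come from the envelope/stationarity condition: $\frac{d}{dt}[s(t)^Tx-l(s(t))]=0$ at the optimal $t$.

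\textbf{Case 2 (Type 3, parabolic function on the nonconvex side of a parabolic edge).} By the Remark after Corollary~\ref{Cor-3}, we replace the piece by its convex envelope, joining the two vertices by a segment. The cells are then polyhedral and the same elimination applies.

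A cleaner route for Cases 2--3 is to use involution. These pieces of $f^*$ arose (see Appendix~\ref{s:conjugate_formulae}) as conjugates of the rational function $r$ in \eqref{eq:rational}, restricted near a convex edge. Since that restricted function is closed and convex, taking the conjugate again must return $r$, with the explicit coefficients $a=-my_1,\dots,k=-y_1+mx_1$ on the corresponding polyhedral cone. We would invoke this first, and fall back on the direct elimination only to confirm the cell boundaries.

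The main obstacle is Case 3/Case 2. Specifically, it is ensuring that the maximum in Step 3 has not altered the parabolic pieces. If it has, the involution argument fails and one must do the explicit elimination, proving that the denominator reduces to a single linear factor $gx+hy+k$. Finally, we would assemble the cases. All cells are polyhedral by Theorem~\ref{Thm-1}, and each carries an expression of the form \eqref{eq:rational}, which proves the theorem.
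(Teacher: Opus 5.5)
Your global framework is sound and differs from the paper's route. You work with the cells $\bigcup_{s\in e}\partial f^*(s)$ over the entities $e$ of the final subdivision of $\dom f^*$, and use $f^{**}(x)=s^Tx-f^*(s)$ from Fenchel--Young. The paper instead computes $\sup_{s\in E}(x^Ts-h(s))$ separately for each entity $E$ of each piece $(h,R)$ of $f^*$, by solving the Lagrange system symbolically (Appendix~\ref{s:conjugate_expr}), and then takes a maximum. Your Cases~1 and~4 and the interior discussion are fine.

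The gap is in Cases~2--3. These are the only cases that produce the rational form, so they carry the real content of the theorem, and none of them is closed.
\begin{itemize}
\item The cancellation in Case~3 is only asserted.
\item The involution route is left conditional on the Step-3 maximum not reshaping the pieces, which you do not establish.
\item The convex-hull replacement you propose for Case~2 is wrong.
\end{itemize}
Replacing a Type~3 piece $(q_p,R)$ by $q_p+I_{\co R}$ does not give the convex envelope of $q_p+I_R$. It discards the parabolic arc, which is the only source of a rational expression. It also overestimates $f^{**}$, because $q_p<l=f^*$ strictly between the chord and the arc.

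Here is a concrete failure. Take $f(x)=x_1x_2$ on $\co\{(0,0),(1,1),(2,0)\}$. Then $f^*(s)=\max\{2s_1,\sup_{0\le t\le 1}(t(s_1+s_2)-t^2)\}$, and its Type~3 piece is $q_p(s)=(s_1+s_2)^2/4$ on $R=\{0\le s_1+s_2\le 2,\ 8s_1\le (s_1+s_2)^2\}$ (Piece~5 of Example~\ref{ex1}). The hull is $\co R=\{0\le s_1+s_2\le 2,\ 3s_1\le s_2\}$. Near $x=(1,1/2)$ this gives $(q_p+I_{\co R})^*(x)=(x_1+3x_2)^2/16$, i.e.\ $25/64$ at that point. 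But $(q_p+I_R)^*(1,1/2)=f^{**}(1,1/2)=1/3$. The paper uses the hull of a Type~3 piece only to get its domain; its Case~2 expression comes from the supremum over the parabolic edge itself.

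The repair is short and makes both detours unnecessary. Parametrize the arc as $s(t)=\gamma+t\beta+t^2\alpha$ and put $u=x-\nabla l$. Then $s(t)^Tu=C+tB+t^2A$, with $A=u^T\alpha$, $B=u^T\beta$, $C=u^T\gamma$ all affine in $x$. Your condition $u\parallel n(t)$ is exactly $u^Ts'(t)=B+2tA=0$, so $t=-B/(2A)$. Substituting gives $f^{**}(x)=C-B^2/(4A)-l(0)$, which has the form $\xi_1^2/\xi_2+\xi_0$. The denominator created by the elimination is $A$ itself, so the cancellation is automatic. (Before it, the expression is cubic over quadratic, not quadratic over a squared linear form as you wrote.)

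For Case~2, stay inside your own framework. The parabolic edge is the common boundary of the Type~2 and Type~3 pieces. Near its relative interior $f^*=\max\{l,q_p\}$, with $q_p=l$ on the arc. Hence $\partial f^*(s)$ is the segment $[\nabla l,\nabla q_p(s)]$, which points along the normal. So Cases~2 and~3 are one and the same cell, carrying the expression just computed. In the example, $\alpha=(1/8,-1/8)$, $\beta=(0,1)$, $\gamma=0$ and $\nabla l=(2,0)$ give exactly $2x_2^2/(x_2-x_1+2)$.

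Because you never conjugate individual pieces, the Step-3 maximum never has to be undone. That is what your route buys over the paper's per-piece-then-maximum computation, which treats Cases~2 and~3 separately and finds the same rational function in both.
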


    \begin{proof}
        We are computing the expressions on nonempty regions as listed in Table~\ref{table:expr} and then computing the maximum over all the regions.
        \begin{case}[Vertices]
            Let $V \subset P$ be the set of all vertices. Then for $v \in V$
            \begin{equation}
                \begin{split}
                    f^*(s) = & \max_{x\in V}{(s^Tx-f(x))},\\
                           = & \max_{x\in V}{(s^Tx-(q(x)+I_{P}(x)))},\\
                           = & s^Tv-q(v),\\
                           = & s_1v_x+s_2v_y-q(v),\\
                \end{split}
                \label{eq:vexpr}
            \end{equation}
            is a linear function. So for any vertex $v$, the conjugate is a linear function defined on an unbounded polyhedral set.
        \end{case}

        \begin{case}[Edge:parabolic; Function:parabolic]
            
            When the edge is parabolic it is of a particular form; $edge = ax_1^2+bx_1x_2+cx_2^2+dx_1+ex_2+f,$ 
            where $a=-1$, $b=-2m$, $d=2q+4mx_1$, $c=-m^2$, $e=-(2mq - 4my_1)$ and $f=-(q^2 + 4mx_1y_1)$. 
            
            The function is $q_p = ax_1^2+bx_1x_2+cx_2^2+dx_1+ex_2+f,$ as given in Appendix \ref{s:conjugate_formulae}.
    
            As the conjugate expression is defined as $f^*(s)=sup_{x\in E}{(s^Tx-f(x))}$ and the function $q_p$ and $edge$ are differentiable, 
            we can compute the supremum to get
            \begin{equation} \label{eq:cexpr}
              f^*(s) = \frac{as_1^2+bs_1s_2+cs_2^2+ds_1+es_2+f}{gs_1+hs_2+k},  
            \end{equation}
            where $a=-my_1$,
            $b=q$,
            $c=x_1$,
            $d=-qy_1+mx_1y_1$,
            $e=-qx_1-x_1y_1$,
            $f=qx_1y_1$,
            $g=-m$,
            $h=1$, and
            $k=-y_1+ mx_1.$
    
        \end{case}

        \begin{case}[Edge:parabolic; Function:linear] 
            In this case, we get the conjugate expression as a rational function given in \eqref{eq:cexpr}. 
        \end{case}
       
        \begin{case}[Edge:linear; Function:parabolic] 
            When the edge is linear, $y=mx+q$ and the function is quadratic, $q=ax^2+bxy+cy^2+dx+ey+f$, we get the conjugate expression as a parabolic function.                
        \end{case}                    
    \end{proof}

    All the cases are verified symbolically and the code used to compute the conjugate expressions is given in Appendix \ref{s:conjugate_expr}.

        Computing the maximum over all the pieces, we obtain a rational function of the forms \eqref{eq:rational} on a polyhedral subdivision.

        This is illustrated in the examples that follow.

\section{Examples} \label{s:examples}
    In this section a few examples of the conjugates and biconjugates of PLQ functions are given. Example \ref{ex1} is a simple example with seven pieces which gives us a biconjugate with two pieces. Example~\ref{ex2} has more pieces. %

\begin{example}\label{ex1}

        The conjugate shown in Figure~\ref{fig:ex_max} has seven pieces, three vertices, one quadratic edge between the first two vertices, one linear edge (line) and some rays. We first compute the conjugate of each piece. The image by the subdifferential of vertices and the quadratic edge are polyhedral sets while the image of other pieces are contained in rays. It can be observed that a particular vertex (edge) from different pieces gives us the same conjugate function. 
        
        The conjugate of the first piece is the function $(s_1,s_2)\mapsto 2s_1$ defined over the domain given by the inequalities $- s_1 - s_2 \le 0 $, $2s_1s_2 - 8s_1 + s_1^2 + s_2^2 \le 0$ and $s_1 + s_2 - 2 \le 0$, and vertices $(0,0)$ and $(1/2,3/2)$. It is shown in Figure~\ref{fig:exp}[a].
                
        For the second piece, we obtain the function $s\mapsto 2s_1$ defined over the domain given by the inequalities $s_2 - 2 \le 0 $, $s_2 - s_1 - 1 \le 0$ and $2 - s_2 - s_1 \le 0$ and vertices $(1/2,3/2)$ and $(1/2,3/2)$; see Figure~\ref{fig:exp}[b].
       
        For the third piece function, we obtain $s\mapsto 2s_1$ defined over the domain given by the inequalities $-s_1 \le 0 $ and $s_1 + s_2 \le 0$ and vertex $(0,0)$; see Figure~\ref{fig:exp}[c].
       
        For the fourth piece, we obtain $s\mapsto 0$ defined over the domain given by the inequalities $s_1 \le 0 $ and $s_1 + s_2 \le 0$ and vertex $(0,0)$; see Figure~\ref{fig:exp}[d].
        
        For the fifth piece, we obtain $s\mapsto (1/4)s_1^2+(1/2)s_1 s_2+(1/4)s_2^2$ defined over the domain given by the inequalities $- s_1 - s_2 \le 0 $, $8s_1 - 2s_1s_2 - s_1^2 - s_2^2 \le 0$ and $s_1 + s_2 -2 \le 0$ and vertices $(0,0)$ and $(1/2,3/2)$; see Figure~\ref{fig:exp}[e].
        
        For the sixth piece, we get $s\mapsto s_1+s_2-1$ defined over the domain given by the inequalities $s_1 - 1\le 0 $, $s_1 - s_2 + 1 \le 0$ and $2 - s_2 - s_1 \le 0$ and vertices $(1,2)$ and $(1/2,3/2)$; see Figure~\ref{fig:exp}[f].

        For the last piece, we get $s\mapsto 2s_1+s_2-2$ defined over the domain given by the inequalities $1 - s_1 \le 0 $ and $2 - s_2 \le 0$ and vertex $(0,0)$. The subdifferential is shown in Figure~\ref{fig:exp}[g].

       \begin{figure}%
        	\centering
        	\subfigure[ $f_1^{**}$   ]{
        		\centering
        		\includegraphics[height=80px]{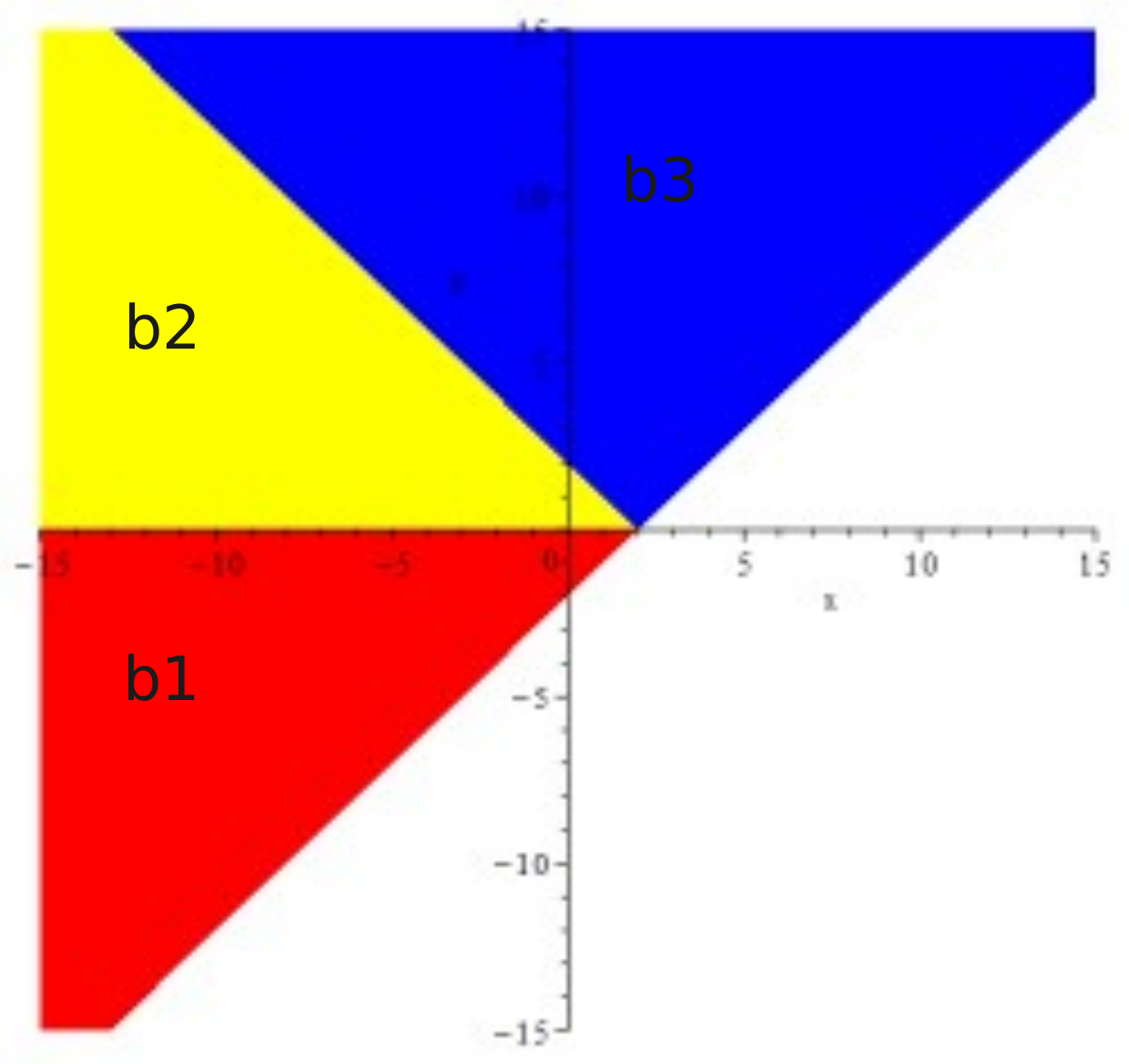}}	
        	\subfigure[ $f_2^{**}$   ]{
        		\centering
        		\includegraphics[height=80px]{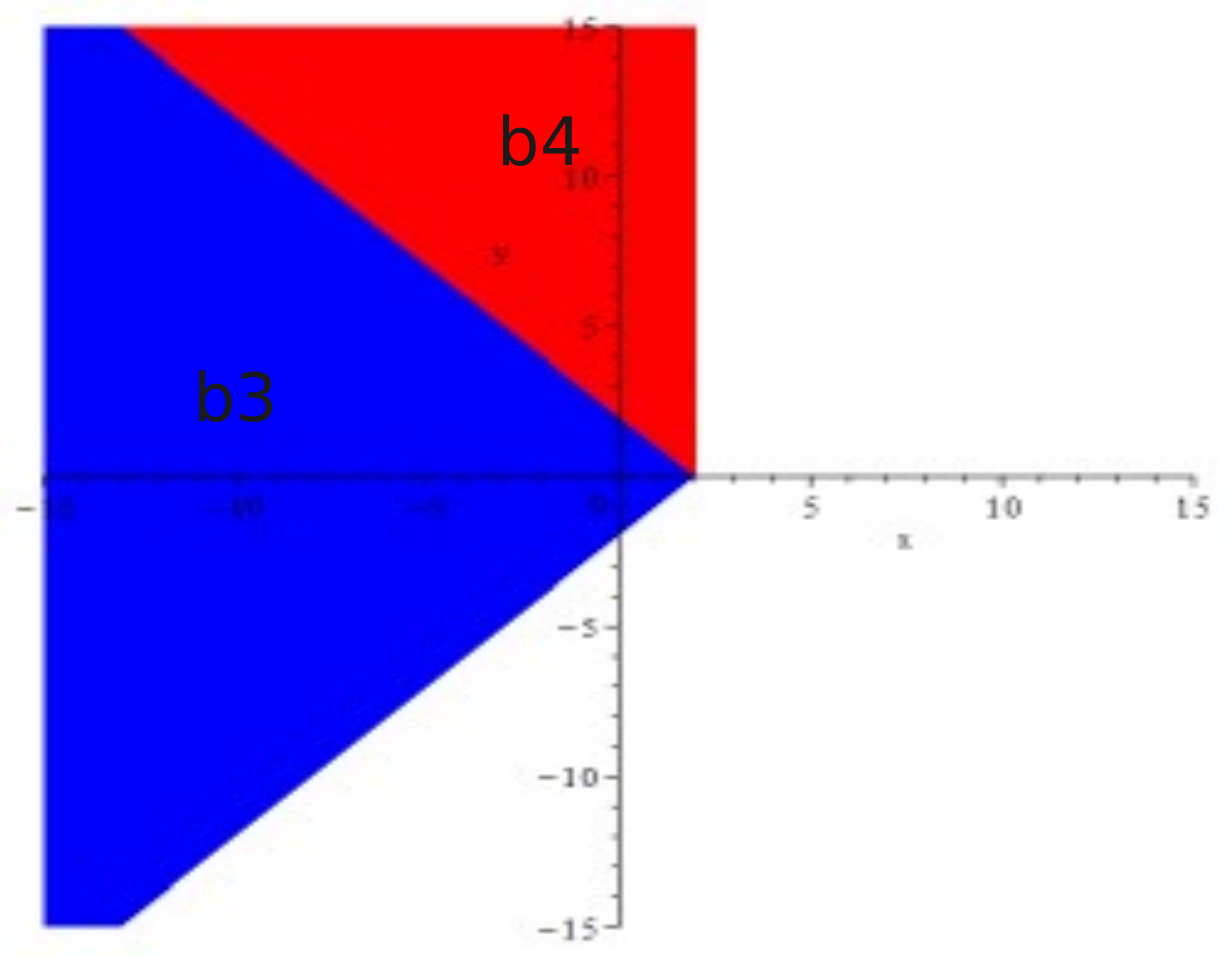}}	
        	\subfigure[ $f_3^{**}$   ]{
        		\centering
        		\includegraphics[height=80px]{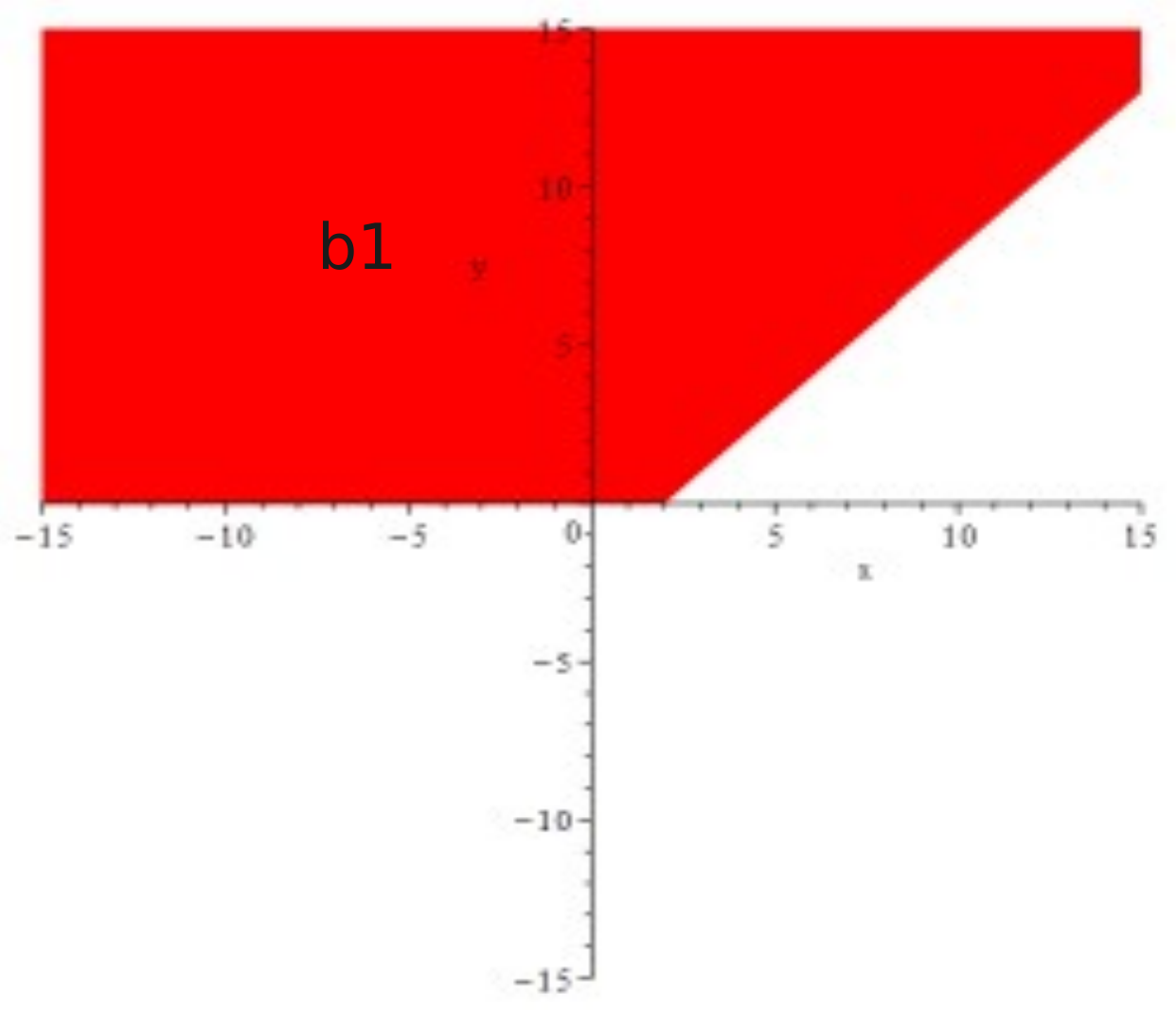}}	
        	\subfigure[ $f_4^{**}$   ]{
        		\centering
        		\includegraphics[height=80px]{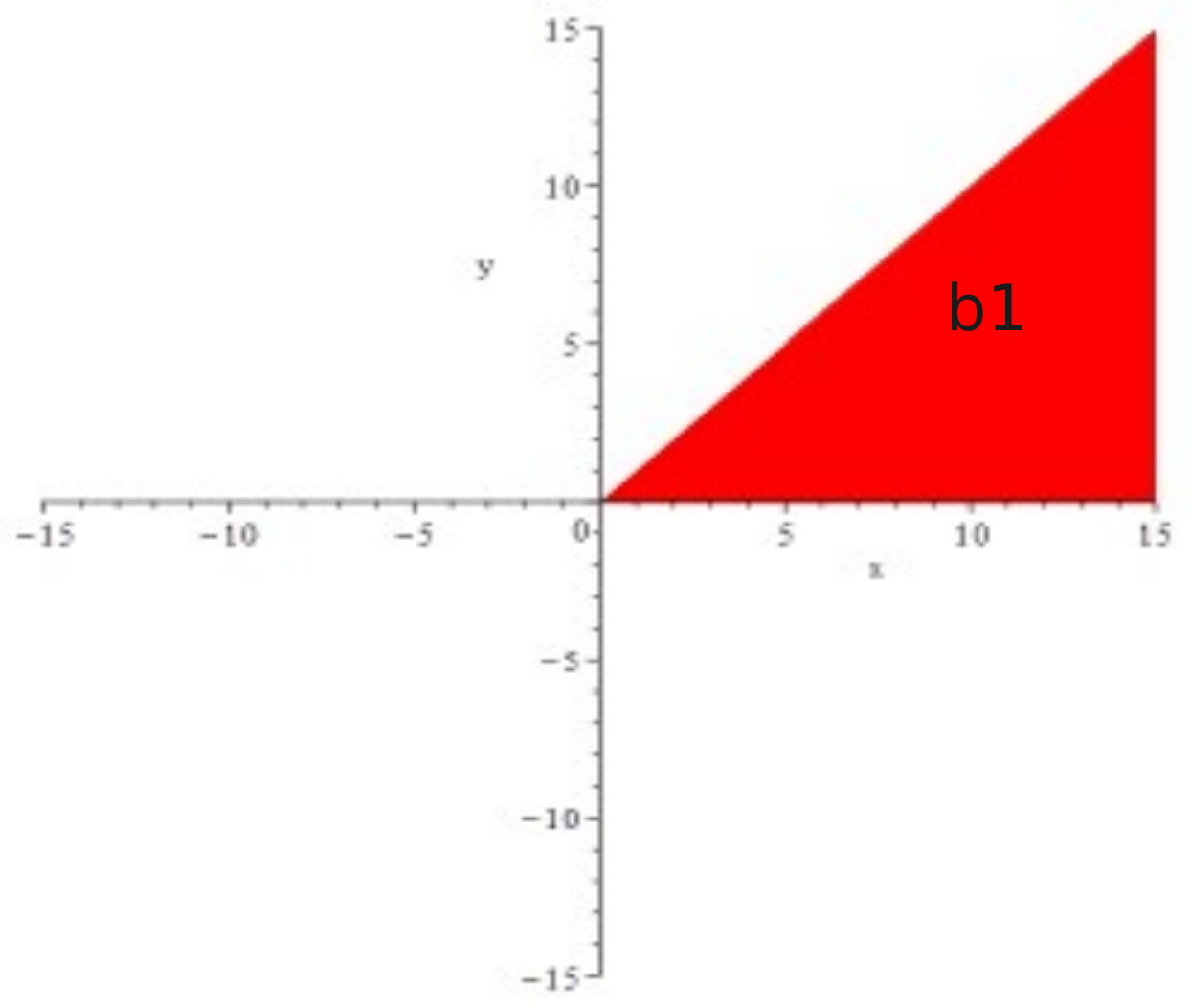}}	
        	\subfigure[ $f_5^{**}$   ]{
        		\centering
        		\includegraphics[height=80px]{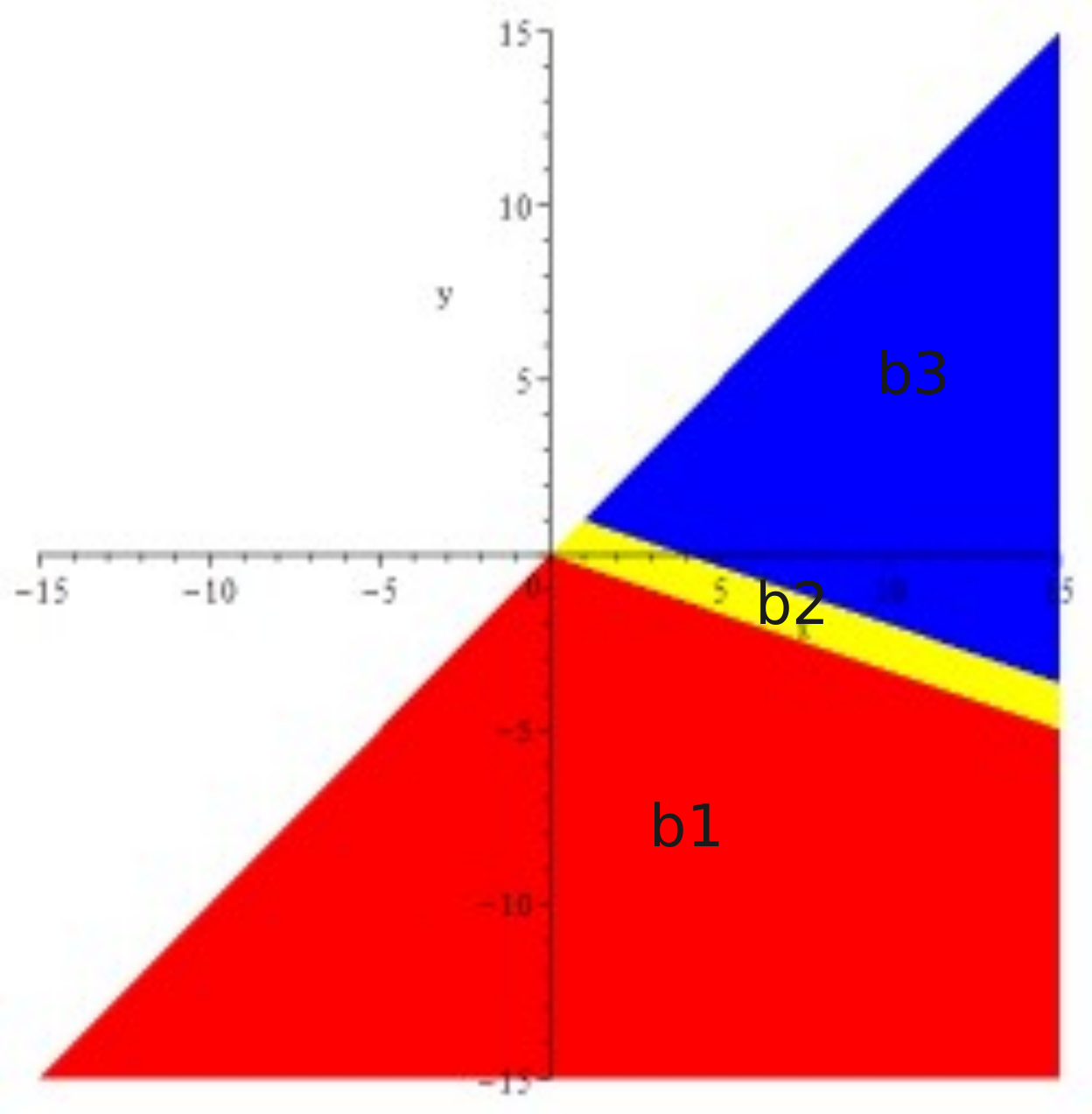}}	
        	\subfigure[ $f_6^{**}$   ]{
        		\centering
        		\includegraphics[height=80px]{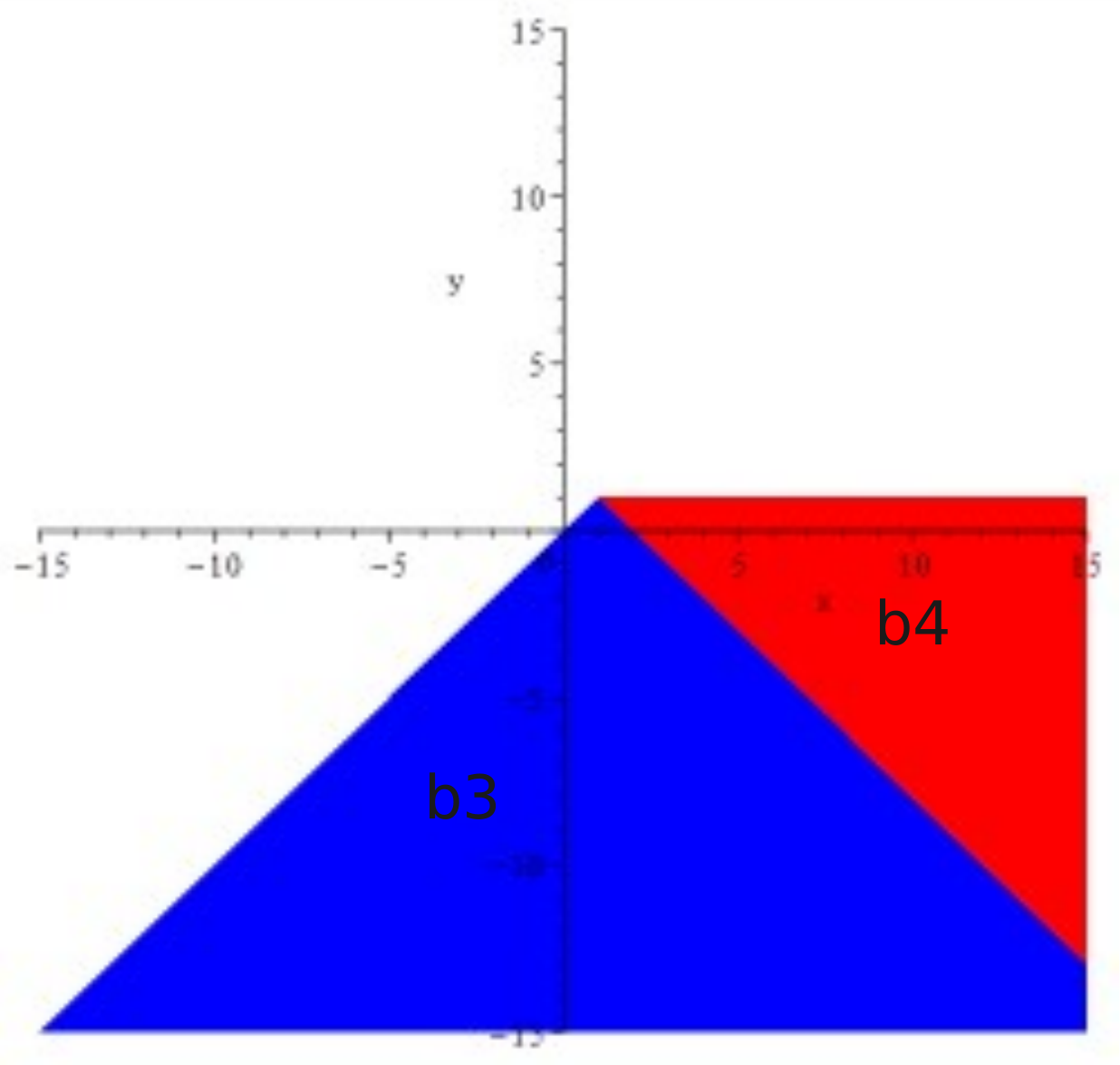}}	
        	\subfigure[ $f_7^{**}$   ]{
        		\centering
        		\includegraphics[height=80px]{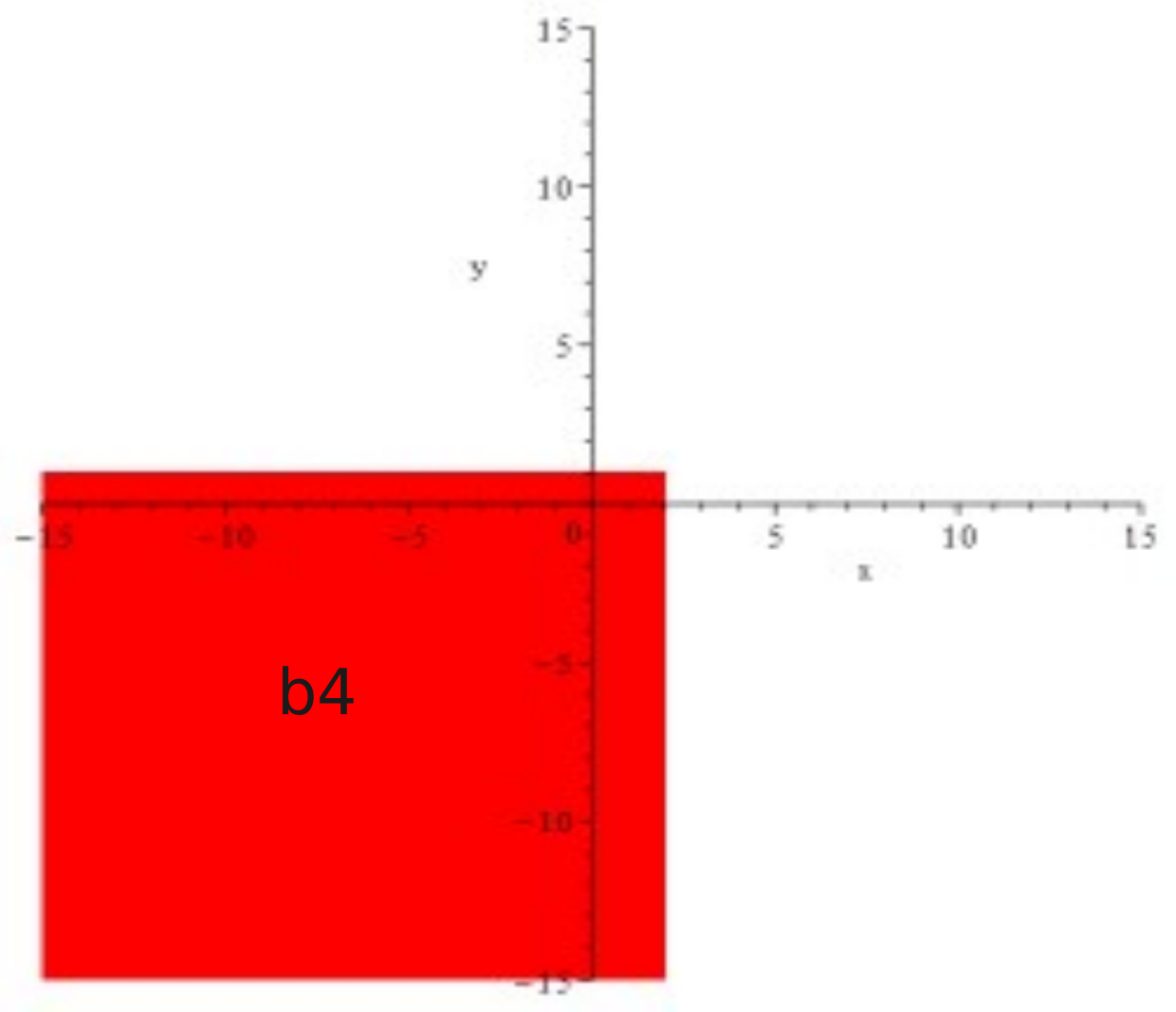}}	
        	
        	\caption{$f_i^{**}$ for Piece i.(Functions are defined in Table~\ref{table:Fig3})}
        	\label{fig:exp}
        \end{figure}

        \begin{table}[tbph]
        \centering
        \caption{Function definitions for Figure \ref{fig:exp}.}
        	\label{table:Fig3}
        \begin{tabular}{@{}llr@{}} \toprule 
            Piece Nunber & Expression\\
            \midrule
        	b1 &$0$ \\
            b2 &$2y^2/(y-x+2)$ \\
            b3 &$x/2+3y/2-1$ \\
            b4 &$x+2y-2$ \\
        \end{tabular}
        \end{table}         
                            
        Taking the maximum over all conjugates we get the biconjugate as shown in Figure~\ref{fig:biconjugate}.

        \begin{figure}
        	\centering
        	\subfigure[$f^{**} = \max_i (f_i^{**})$ ]{
        		\centering
        		\includegraphics[height=80px]{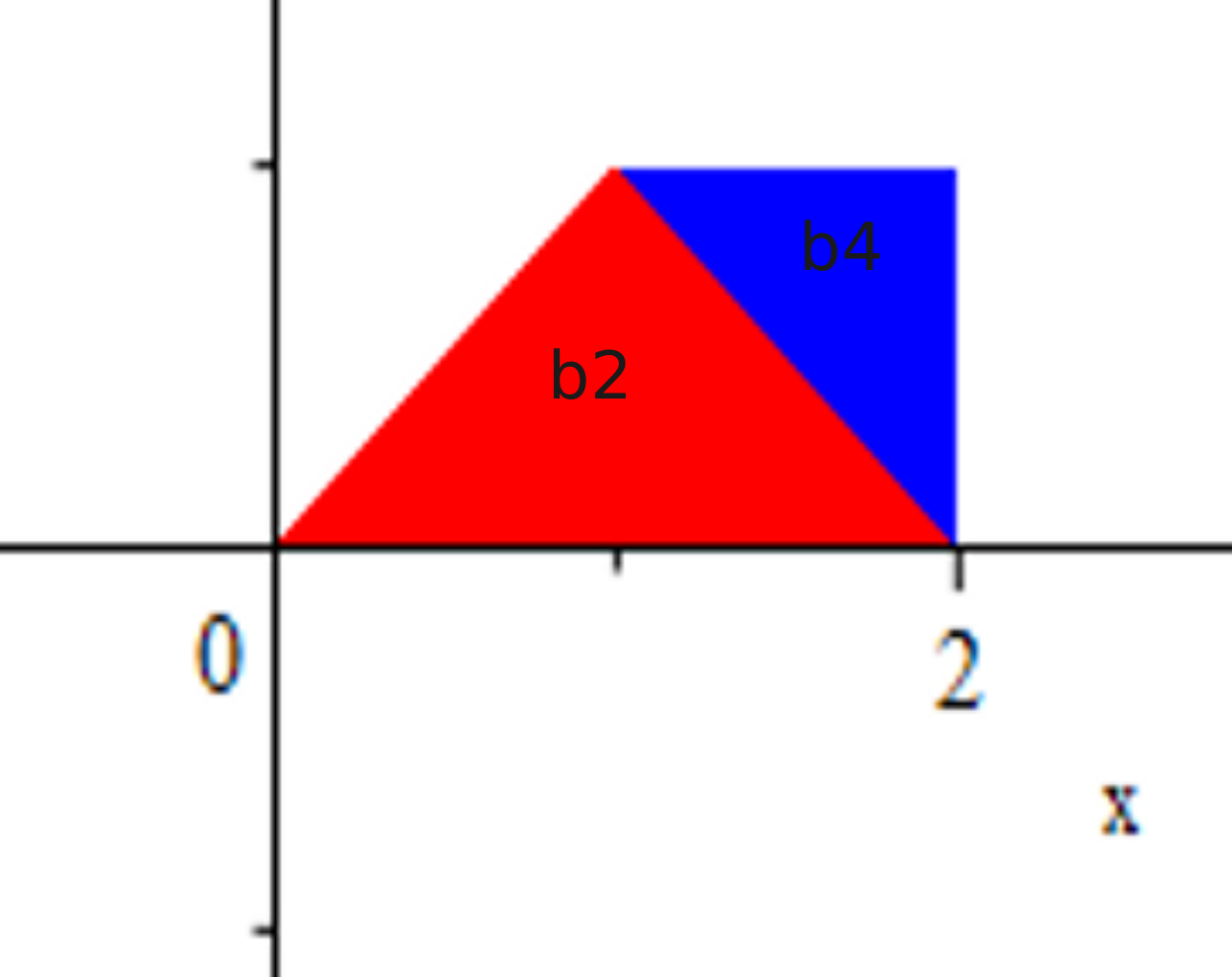}}
        	\caption{Biconjugate(Functions are defined in Table~\ref{table:Fig3})}
        	\label{fig:biconjugate}
        \end{figure}
        \FloatBarrier
\end{example}

\begin{example}\label{ex2}    

        Figure~\ref{fig:exp2}[a] is the conjugate of a PLQ function, the biconjugate is given in Figure~\ref{fig:exp2}[b].

        \begin{figure}%
	\centering
	\subfigure[Conjugate]{
		\centering
        \includegraphics[width=.45\textwidth]{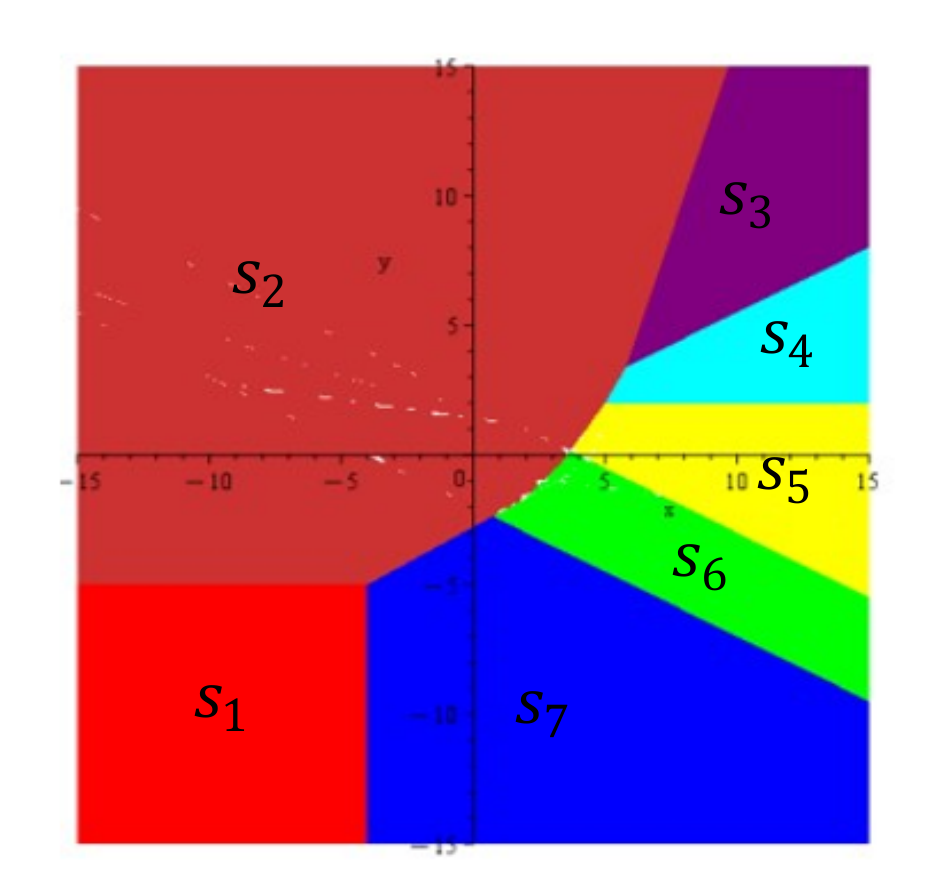}}%
        \hfill
    \subfigure[Biconjugate]{
		\centering		%
        \includegraphics[width=.45\textwidth]{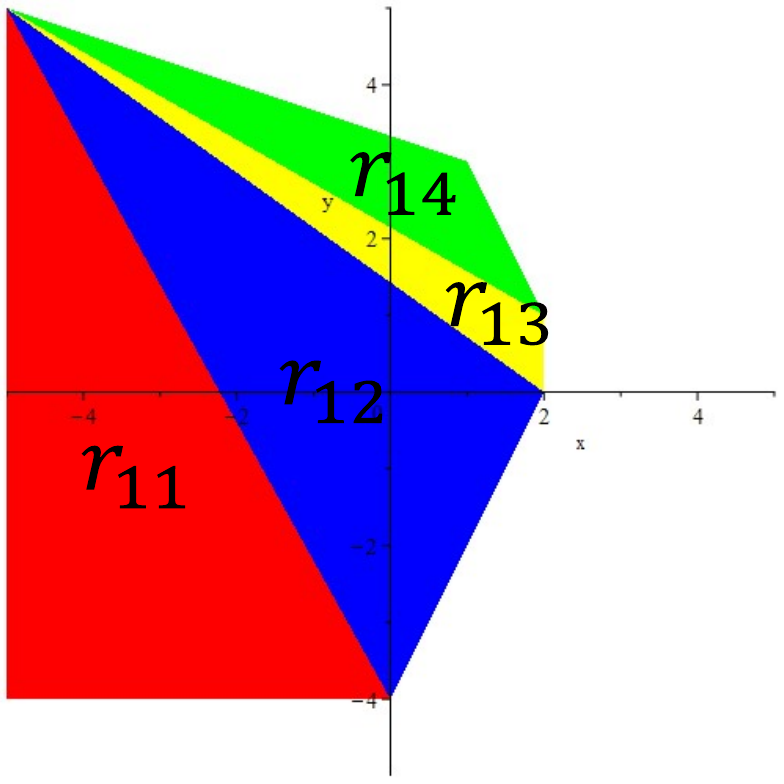}}	\\

	\caption{Conjugate of Example 2.(The functions are enumerated in the Table \ref{table:fig4})}
	\label{fig:exp2}
\end{figure}

\begin{table}[tbph]
\centering
\caption{Function definitions for Figure \ref{fig:exp2}.}
	\label{table:fig4}
\begin{tabular}{@{}llr@{}} \toprule 
    Function name & Expression\\
    \midrule
	$r_{11}$ &$-4x-5y-20$  \\ 
    $r_{12}$ &$(30x - 5y + 4xy+10x^2+5y^2-100)/(2x - y + 15)$  \\ 
    $r_{13}$ &$5x + 2y - 10$  \\ 
    $r_{14}$ &$29x/5 + 17y/5-13$  \\ 
    $s_{1}$ &$-5s_1+4s_2-20$  \\ 
    $s_{2}$ &$-5s_1+5s_2+25$  \\ 
    $s_{3}$ &$s_1+3s_2-3$  \\ 
    $s_{4}$ &$2s_1+s_2-2$  \\ 
    $s_{5}$ &$2s_1$  \\ 
    $s_{6}$ &$s_1^2/8+s_1s_2/2+s_1+s_2^2/8-2s_2+2$  \\ 
    $s_{7}$ &$-4s_2$  \\ 
	\bottomrule 
\end{tabular}
\end{table}

        \FloatBarrier
\end{example}

        \FloatBarrier

\section{Conclusion}\label{s:conclusion}
    Previous work showed a linear-time algorithm to compute the conjugate of a bivariate PLQ function. Here we show a linear time algorithm to compute the biconjugate thereby the conjugate and biconjugate of a bivariate PLQ function can be obtained in linear time. 
    
    While it was known that the biconjugate admits a polyhedral subdivision, its expression was unknown. We show that it is a rational function.
    
    As future work, once we have the subdivision of the domain of the biconjugate, we can directly compute the expressions using the same formulae obtained in \ref{s:convexenvelope_formulae}. 
    
    After observing experimental results, it seems possible to predict the subdivision of the domain of the biconjugate from the domain of the original PLQ function. This seems possible when the original domain is convex and connected. Experimentally, there seem to be patterns emerging in the domain of the biconjugate, hence the next step would be to compute these divisions directly. If we can get direct formulae, then it might be possible to recognize patterns and extend the work to three or more variables. 
    
    Another possible direction is to explore non-convex domains for PLQ functions.
    
    From a coding perspective, the code could be parallelized resulting in faster computation.
        
    From an application perspective, the convex envelope is used to determine lower bounds while looking for a global minimum, for example in a branch-and-bound algorithm. It would be interesting to look into whether having explicit expressions that are quickly computed help such computation.
    
\begin{appendix}
\appendixpage
    \section{Convex envelope of a bilinear function over a triangle}\label{s:convexenvelope_formulae}
        When a function is restricted to an edge of the triangle and the resulting function is strictly convex, we call the edge a convex edge. In case of bilinear functions, when the slope is positive, the edge is convex. The triangular pieces can be classified by the number of convex edges they have. Thus we have the following triangles:
    \begin{enumerate}
        \item No Convex edges.
        \item One Convex edge.
        \item Two Convex edges.
        \item Three Convex edges.
    \end{enumerate}

    The paper~\cite{LOCATELLI-16} gives us a method to compute the convex envelope of a bilinear function on a polytope. We have solved these optimization problems symbolically to obtain direct formulae to obtain the convex envelopes. This gives us the envelopes in one step without having to solve an exponential number of optimization problems. We provide the formulae in this section using the same notations as~\cite{LOCATELLI-16}, and refer to that paper for more details.
    
    \subsection{No Convex edges}\label{noconvexEdge}
        When there are no convex edges, the set of convex edges is empty while the set of vertices has all three vertices.
    
        \subsubsection{Optimization Problem}
    
        Subproblems for every pair \((i,j), i,j \in V_P, i \neq j\) are 
        \begin{equation*}
            \begin{aligned}
                \max \{ \eta_{i}(a,b) +ax + by \;: \;  
                    &\eta_{i}(a,b)=\eta_{j}(a,b), \\
                    & \eta_{i}(a,b)\leq \eta_{k}(a,b), k \neq i, k \neq j,\\
                    & (a,b) \in S_{r} \}.
            \end{aligned}
        \end{equation*}

        In this case, we have three optimization problems for vertex pairs (1,2), (1,3) and (2,3) as follows   
        \begin{equation*} 
            \begin{aligned}
                \max \{ \eta_{i}(a,b) +ax + by \;: \;  
                    &\eta_{i}(a,b)=\eta_{j}(a,b)\}.
            \end{aligned}
        \end{equation*}
        The solutions to all of these is the same as symbolic verified in ``LinearlinearT.m''.
    
        The convex envelope for a bilinear form defined on a triangle with vertices $(x_1,y_1)$,$(x_2,y_2)$,$(x_3,y_3)$ is given by $ax + by + c$
        where
        \begin{align*}
        a &=\frac{(x_1y_1y_2 - x_1y_1y_3 - x_2y_1y_2 + x_2y_2y_3 + x_3y_1y_3 - x_3y_2y_3)}{(x_1y_2 - x_2y_1 - x_1y_3 + x_3y_1 + x_2y_3 - x_3y_2)},\\
        b &=\frac{(x_1x_2y_2 - x_1x_2y_1 + x_1x_3y_1 - x_1x_3y_3 - x_2x_3y_2 + x_2x_3y_3)}{(x_1y_2 - x_2y_1 - x_1y_3 + x_3y_1 + x_2y_3 - x_3y_2)},\\
        c &=\frac{(x_1x_2y_1y_3 - x_1x_3y_1y_2 - x_1x_2y_2y_3 + x_2x_3y_1y_2 + x_1x_3y_2y_3 - x_2x_3y_1y_3)}{(x_1y_2 - x_2y_1 - x_1y_3 + x_3y_1 + x_2y_3 - x_3y_2)}.
        \end{align*}

    \begin{example}\label{Example1}
        The Convex Envelope of $f(x,y)=xy$ over $\co \{(1,-1), (-1,-1). (-1,1)\}$ is $(x,y)\mapsto -x-y-1$ on the same domain; see Figure~\ref{fig:ce1}.
        
        \begin{figure}
            \centering
            \includegraphics[width=.5\textwidth,trim=170 180 160 130, clip]{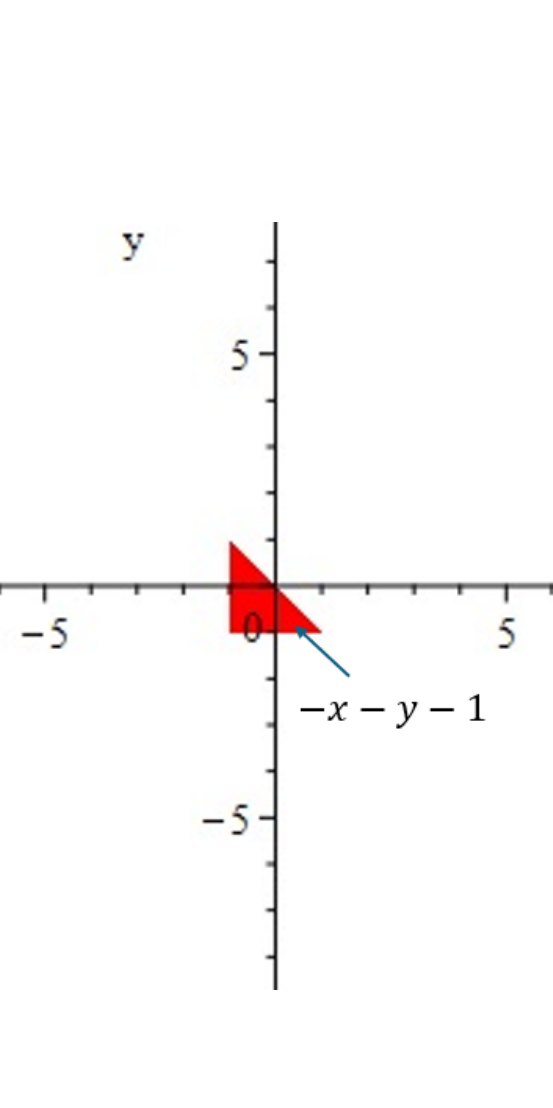}
            \caption{Convex Envelope for Example \ref{Example1}.}
            \centering
            \label{fig:ce1}
        \end{figure}
    \end{example}

    \subsection{One Convex edge}
        Here we have one convex edge $y=mx+q$, hence the set $E_P$ has one edge and the set $V_P$ has one vertex. Using this information and referring to \cite{LOCATELLI-16} we solve the following optimization problems in order to obtain the convex envelope.
    
        Subproblems for every pair \((i,j), i,j \in E_P \cup V_P, i \neq j\) are given as 
        \begin{equation*} 
            \begin{aligned}
                \max \{ \eta_{i}(a,b) +ax + by \;: \;  
                    &\eta_{i}(a,b)=\eta_{j}(a,b), \\
                    & \eta_{i}(a,b)\leq \eta_{k}(a,b), k \neq i, k \neq j,\\
                    & (a,b) \in S_{r} \}.
            \end{aligned}
        \end{equation*}

        The $\eta$ functions obtained are:
        \begin{itemize}
            \item For $V$, $\eta_w = f(x_1,y_1) - ax_1 - by_1 = x_1y_1 - ax_1 - by_1.$
            \item For $E$,  $\eta_h = -(a+mb-q)^2)/(4m)-bq$ for edge $y=mx+q.$
            \item For $E^-$, $\eta_h^- = f(x_2,y_2) - ax_2 - by_2 = x_2y_2 - ax_2 - by_2$.
            \item For $E^+$, $\eta_h^+ = f(x_3,y_3) - ax_3 - by_3 = x_3y_3 - ax_3 - by_3$.
        \end{itemize}
        Thus we have three optimization problems corresponding to $V - E^-$, $V - E$ and $V - E^+$.

        \subsubsection{Optimization Problem (V - E)}

        In this case, $\eta_h = -\frac{(a+mb-q)^2}{4m}-bq$ corresponding to the edge, while $\eta_w = x_1y_1 - ax_1 - by_1$ corresponding to the vertex $(x_1,y_1)$. Thus the optimization problem can be written as
        \begin{equation} \label{optvE}
            \max \{ \eta_w +ax + by \; : \;  \eta_w=\eta_h, (a,b) \in S_{r} \}.
        \end{equation}

        By solving this problem symbolically we obtain the objective as a rational function, given by
        \begin{equation} \label{eq:rational1}
        r(x,y)=\frac{ax^2+bxy+cy^2+dx+ey+f}{gx+hy+k},
        \end{equation}
        where $a=-my_1$, $b=q$, $c=x_1$, $d=-qy_1+mx_1y_1$, $e=-qx_1-x_1y_1$, $f=qx_1y_1$, $g=-m$, $h=1$ and $k=-y_1+ mx_1$; or we get
        \[r(x,y)=\frac{\xi_1(x,y)^2}{\xi_2(x,y)}+\xi_0(x,y)\]
        where $\xi_1(x,y) = ax+by+c$ with
        \begin{align*}        
            a=\frac{q-2y_1}{4(q-y_1+mx_1)},\\
            b=\frac{q+2mx_1}{4m(q-y_1+mx_1)},\\
            c=\frac{q(-y_1-mx_1)}{4m(q-y_1+mx_1)};
        \end{align*}
        $\xi_2(x,y) = ax+by+c$ with
        \begin{align*}
        a&=\frac{-1}{4(q-y_1+mx_1)},\\
        b&=\frac{1}{4m(q-y_1+mx_1)},\\
        c&=\frac{(-y_1+mx_1)}{4m(q-y_1+mx_1)};
        \end{align*}
        and $\xi_0(x,y) = ax+by+c$ with 
        \begin{align*}
            a&=\frac{q^2+4mx_1y_1}{4(q-y_1+mx_1)},\\
            b&=\frac{-q^2-4mx_1y_1}{4m(q-y_1+mx_1)},\\
            c&=\frac{(q^2y_1-mq^2x_1+4mqx_1y_1)}{4m(q-y_1+mx_1)}.
        \end{align*}

        The edges $du\xi_2=\xi_1$ and $dl\xi_2=\xi_1$ are the two sides of original triangle, hence we do not need to compute the other two linear functions as given in \cite{LOCATELLI-16} as the solution to the optimization problem (\ref{optvE}). 
        
        Rational function evaluated at the vertex $(x_1,y_1)$ is $r(x_1,y_1) = 0/0$ where $\lim_{(x,y) \mapsto (x_1,y_1)} = f(x_1,y_1) $ while $r(x_2,y_2) = f(x_2,y_2)$ and $r(x_3,y_3) = f(x_3,y_3)$.
        
        Verified symbolically in vertexNan.m

        \subsubsection{Optimization Problems \texorpdfstring{$(V - E^-)$ and $(V - E^+)$}{Optimization Problems (V - E- and V - E+)}}

        The solutions obtained by solving  the other two optimization problems, ($V - E^-$) and $V - E^+$ are either infeasible or lie below the solution obtained by solving $V - E$.         
        Thus the rational function obtained in \eqref{eq:rational1} is the convex envelope over the entire triangle.

        \subsubsection{Example 2} \label{Example2}
         The Convex Envelope of $f(x,y)=x y$ over $\co \{(1,1), (0,0), (2,0)\}$ is the function $(x,y)\mapsto (2y^2)/(y - x + 2)$ on the same domain as shown in Figure~\ref{fig:ce1a}.
        
        \begin{figure}
            \centering
            \includegraphics[width=.45\textwidth,trim=150 150 100 100, clip]{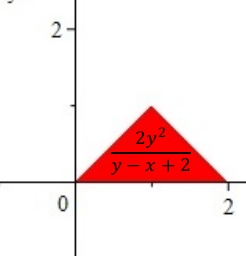}
            \caption{Convex Envelope.}
            \centering
            \label{fig:ce1a}
        \end{figure}

        \subsection{Two Convex edges}
        Here we have two convex edge $y=mx+q$, hence the set $E_P$ has two edges and the set $V_P$ is empty. Using this information and referring to \cite{LOCATELLI-16} we solve the following optimization problems in order to obtain the convex envelope.

        \begin{equation*}
            \begin{aligned}
                \max \{ \eta_{i}(a,b) +ax + by \;: \;  
                    &\eta_{i}(a,b)=\eta_{j}(a,b), \\
                    & \eta_{i}(a,b)\leq \eta_{k}(a,b), k \neq i, k \neq j,\\
                    & (a,b) \in S_{r} \},
            \end{aligned}
        \end{equation*}
        where
        $V_1(x_1,x_2)$, $V_2(x_2,y_2)$ and $V_3(x_3,y_3)$ are the vertices of the triangle with $V_1(x_1,x_2)$ the common vertex between the two convex edges. Thus
        \[
          x_1 = -\frac{q_h-q_w}{m_h-m_w}, y_1 = \frac{m_hq_w-m_wq_h}{m_h-m_w},
          x_2=vx_1, y_2=q_h+m_hvx_1, 
        \]
        and $x_3=v x_2$, $y_3=q_w+m_w v x_2$.

        \subsubsection{Eta functions.}
            Corresponding to the two convex edges, ($V_2-V_1$) and ($V_3-V_1$) we get the following $\eta$ functions.
            
            \begin{itemize}
                \item For $E_1$,  $\eta_h = -(a+m_hb-q_h)^2)/(4m_h)-bq_h$ 
                \item For $E_1^-$, $\eta_h^- = f(x_2,y_2) - ax_2 - by_2 = x_2y_2 - ax_2 - by_2$.
                \item For $E_1^+$, $\eta_h^+ = f(x_1,y_1) - ax_1 - by_1 = x_1y_1 - ax_1 - by_1$.
                \item For $E_2$,  $\eta_w = -(a+m_wb-q_w)^2)/(4m_w)-bq_w$ 
                \item For $E_2^-$, $\eta_w^- = f(x_3,y_3) - ax_3 - by_3 = x_3y_3 - ax_3 - by_3$.
                \item For $E_2^+$, $\eta_w^+ = f(x_1,y_1) - ax_1 - by_1 = x_1y_1 - ax_1 - by_1$.
            \end{itemize}        
        Thus we get nine optimization problems by taking all the pairs. Of these problems only one, $E_1 - E_2$ is feasible.      
            
        \subsubsection{Optimization Problem  (\texorpdfstring{$E_1 - E_2$}))}
            In this problem, we consider the pair of $\eta$ functions corresponding to the two convex edges. Thus 
            \[\eta_h = -\frac{(a+m_hb-q)^2}{4m_h}-bq_h \text{ and } \eta_w = -\frac{(a+m_wb-q)^2}{4m_w}-bq_w.\] 
            The optimization problem can now be written as             
            \[\max \{ \eta_h +ax + by : \eta_w=\eta_h,  (a,b) \in S_{r} \}.\]
            Reformulating the constraint $\eta_h=\eta_w$, we get
            \begin{gather*}                        
               -\frac{(a+m_hb-q)^2}{4m_h}-bq_h = -\frac{(a+m_wb-q)^2}{4m_w}-bq_w, \\
               (\frac{m_h - m_w}{4m_hm_w})a^2 -2 (\frac{m_hq_w - m_wq_h}{4m_hm_w})a + c = 0,
            \end{gather*}
            \[\text{where }c = \frac{- b^2m_h^2m_w + b^2m_hm_w^2 - 2bm_hm_wq_h + 2bm_hm_wq_w + m_hq_w^2 - m_wqh^2}{4m_hm_w}.\]            
            Solving for a \label{eq_alpha}, we get 
            
            \[a = \frac{m_hq_w - m_wq_h \pm \sqrt{m_hm_w} (q_h- q_w+ bm_h - bm_w)}{m_h - m_w}.\]
            
            Let $a = \alpha_1b+\alpha_0$ with 
            \[\alpha_1=\sqrt{m_hm_w},\]
            \[\alpha_0=\frac{m_hq_w - m_wq_h + \sqrt{m_hm_w} (q_h- q_w)}{m_h - m_w}.\]

            The objective function is $\eta_h + ax + by$, by substituting $\eta_h$ and $a$ we get objective in terms of $b$.
            Similarly we get the bounds for $b$ by substituting for a in $l_b \le a+mb \le u_b$. 
            Thus we get the convex expression as 
            \[ax^2 + bxy + cy^2 + dx + ey +f \]
            where
            \begin{align*}
                a &= \frac{m_hm_w}{m_h + m_w + 2\sqrt{m_hm_w}},\\
                b &= \frac{2\sqrt{m_hm_w}}{m_h + m_w + 2\sqrt{m_hm_w}},\\
                c &= \frac{1}{m_h + m_w + 2\sqrt{m_hm_w}},\\
                d &= \frac{(m_hq_w + m_wq_h)}{m_h + m_w + 2\sqrt{m_hm_w}},\\
                e &= \frac{(- (q_h + q_w))}{m_h + m_w + 2\sqrt{m_hm_w}},\\
                f &= \frac{q_hq_w}{m_h + m_w + 2\sqrt{m_hm_w}}.
            \end{align*}
            or
            \[a = \frac{m_hm_w}{m_h + m_w - 2\sqrt{m_hm_w}}\]
            \[b = \frac{-2\sqrt{m_hm_w}}{m_h + m_w - 2\sqrt{m_hm_w}}\]
            \[c = \frac{1}{m_h + m_w - 2\sqrt{m_hm_w}}\]
            \[d = \frac{(m_hq_w + m_wq_h)}{m_h + m_w - 2\sqrt{m_hm_w}}\]
            \[e = \frac{(- (q_h + q_w))}{m_h + m_w - 2\sqrt{m_hm_w}}\]
            \[f = \frac{q_hq_w}{m_h + m_w - 2\sqrt{m_hm_w}}\]            
            By checking the bounds we get the domain as the entire triangle.

            \subsubsection{Example}
            Consider $f(x,y)=xy$ defined on the triangle with vertices, $V_1=(2,1), V_2=(0,0), V_3=(1,0)$. The vertex $V_1$ is the common vertex between the two convex edges. For the first convex edge $V_1-V_3$, $m_h = 1$ and $q_h = -1$. For the second convex edge, $V_1-V_2$, $m_w = \frac{1}{2}$ and $q_w = 0$.
                
            The convex envelope has the same domain as $f$ with expression
            \[(x,y)\mapsto \frac{x^2 + 2\sqrt{2}xy + 2y^2  - x  + 2y  }{3 + 2\sqrt{2}}.\]            
            It is obtained from the second solution for $a$ in Section \ref{eq_alpha} by selecting the solution that is lower than $f$.  

        \subsection{Three Convex edges}
            In case we get a triangle with three convex edges, we can split it into two triangles with two convex edges each by adding a horizontal edge from the middle vertex. 

    \section{Conjugate of convex envelope of a bilinear function over a triangle}\label{s:conjugate_formulae}
       We improve the computational efficiency of~\cite{KUMAR-19} by symbolically computing formulae to obtain the conjugate corresponding to the convex envelopes in the above section.
        
        \subsection{Triangle with no Convex edges}
        The conjugates corresponding to the interior and edges are contained in rays. Corresponding to the three vertices, we get unbounded polyhedral regions with $s_1v_x+s_2v_y-f(v_x,v_y)$ defined for each vertex $(v_x,v_y)$.

        \subsection{Triangle with one convex edge}
            The conjugate corresponding to the interior is contained in rays \cite{KUMAR-19}.

            \subsubsection{Domain of the conjugate}
            Name Vertex 1 a vertex where $r(x,y)=0/0$. Its subdifferential has a parabolic inequality, while the subdifferential at edge (2,3) is the other side of the same parabolic inequality. For the other two edges, we get polyhedral subdivisions; see \cite{KUMAR-19}.
    
            The equation of the parabola is computed as $as_1^2+bs_1s_2+cs_2^2+ds_1+es_2+f$
            where $a=-1$, $b=-2m$, $d=2q+4mx_1$, $c=-m^2$, $e=-(2mq - 4my_1)$, and $f=-(q^2 + 4mx_1y_1)$.

            \subsubsection{Expression at vertices}
                At Vertex $(v_x,v_y)$, the conjugate expression is 
                \begin{equation}\label{eq:vertex}
                (s_1,s_2)\mapsto s_1v_x+s_2v_y-f(v_x,v_y).    
                \end{equation}                
                
            \subsubsection{Expression at edge} \label{conjExpr}
                The conjugate expression on an edge $y=mx+q$ is given by $ax^2+bxy+cy^2+dx+ey+f$,
                where
                 \[a = \frac{1}{4m},
                 b = \frac{1}{2},
                 c = \frac{m}{4},
                 d = -\frac{q}{2m},
                 e =  \frac{q}{2},
                 f = \frac{q^2}{4m}.\]                
                At the other two edges, we get the linear function \eqref{eq:vertex}.
                
        \subsection{Triangle with two convex edges}
            The conjugates corresponding to the interior is contained in a ray. Corresponding to the three vertices, we get unbounded polyhedral regions with $s_1v_x+s_2v_y-f(v_x,v_y)$ defined for each vertex $(v_x,v_y)$. Corresponding to the edges we get three linear edges, two of which are parallel. For the two convex edges we obtain the expression 
            \[\frac{m_h^2y^2 + 2m_hq_hy + 2m_hxy + q_h^2 - 2q_hx + x^2}{4m_h},
            \]
            when the edge is $y = m_h x+q_h.$ which is same as the expression in Section~\ref{conjExpr}.
        
    \section{Conjugate Expression: Code}\label{s:conjugate_expr}
   
    Conjugate expression, $f^*(s) = \sup(s.x-f(x))$ for $f$ restricted to an edge.
    \begin{lstlisting}[escapeinside={(*}{*)}]
(*function conj = conjugateExpr(edge,f,x,y)*)
    lambda = sym('lambda');
    infs = sym('inf');
    s1 = sym('s1');
    s2 = sym('s2');

    %edge
    (*dedge = sym.empty();*)
    (*dedge(1) = diff(edge,x);*)
    (*dedge(2) = diff(edge,y);*)

    %f
    (*df = sym.empty();*)
    (*df(1) = diff(f,x);*)
    (*df(2) = diff(f,y);*)
    
    (*eq1 = s1 -  df(1) - lambda*dedge(1);*)
    (*eq2 = s2 -  df(2) - lambda*dedge(2);*)
    
    (*xyl = solve([eq1,eq2,edge],[x,y,lambda]);*)
    
     (*conj = infs;*)
    
    (*if isempty(xyl)*)
    (*   return*)
    (*elseif isempty(xyl.x) | isempty(xyl.y) | isempty(xyl.lambda)*)
    (*   return*)
    (*end*)
    
    (*conj = s1*xyl.x + s2*xyl.y - subs(f,[x,y],[xyl.x,xyl.y]);*)
    (*conj = simplifyFraction(conj););*)
    (*conj = subs(conj,[s1,s2],[x,y]););*)

(*end*)
\end{lstlisting}

\end{appendix}

\bibliography{biconjugate}

\def\cprime{$'$}

\begin{thebibliography}{45}
\ifx \bisbn   \undefined \def \bisbn  #1{ISBN #1}\fi
\ifx \binits  \undefined \def \binits#1{#1}\fi
\ifx \bauthor  \undefined \def \bauthor#1{#1}\fi
\ifx \batitle  \undefined \def \batitle#1{#1}\fi
\ifx \bjtitle  \undefined \def \bjtitle#1{#1}\fi
\ifx \bvolume  \undefined \def \bvolume#1{\textbf{#1}}\fi
\ifx \byear  \undefined \def \byear#1{#1}\fi
\ifx \bissue  \undefined \def \bissue#1{#1}\fi
\ifx \bfpage  \undefined \def \bfpage#1{#1}\fi
\ifx \blpage  \undefined \def \blpage #1{#1}\fi
\ifx \burl  \undefined \def \burl#1{\textsf{#1}}\fi
\ifx \doiurl  \undefined \def \doiurl#1{\url{https://doi.org/#1}}\fi
\ifx \betal  \undefined \def \betal{\textit{et al.}}\fi
\ifx \binstitute  \undefined \def \binstitute#1{#1}\fi
\ifx \binstitutionaled  \undefined \def \binstitutionaled#1{#1}\fi
\ifx \bctitle  \undefined \def \bctitle#1{#1}\fi
\ifx \beditor  \undefined \def \beditor#1{#1}\fi
\ifx \bpublisher  \undefined \def \bpublisher#1{#1}\fi
\ifx \bbtitle  \undefined \def \bbtitle#1{#1}\fi
\ifx \bedition  \undefined \def \bedition#1{#1}\fi
\ifx \bseriesno  \undefined \def \bseriesno#1{#1}\fi
\ifx \blocation  \undefined \def \blocation#1{#1}\fi
\ifx \bsertitle  \undefined \def \bsertitle#1{#1}\fi
\ifx \bsnm \undefined \def \bsnm#1{#1}\fi
\ifx \bsuffix \undefined \def \bsuffix#1{#1}\fi
\ifx \bparticle \undefined \def \bparticle#1{#1}\fi
\ifx \barticle \undefined \def \barticle#1{#1}\fi
\bibcommenthead
\ifx \bconfdate \undefined \def \bconfdate #1{#1}\fi
\ifx \botherref \undefined \def \botherref #1{#1}\fi
\ifx \url \undefined \def \url#1{\textsf{#1}}\fi
\ifx \bchapter \undefined \def \bchapter#1{#1}\fi
\ifx \bbook \undefined \def \bbook#1{#1}\fi
\ifx \bcomment \undefined \def \bcomment#1{#1}\fi
\ifx \oauthor \undefined \def \oauthor#1{#1}\fi
\ifx \citeauthoryear \undefined \def \citeauthoryear#1{#1}\fi
\ifx \endbibitem  \undefined \def \endbibitem {}\fi
\ifx \bconflocation  \undefined \def \bconflocation#1{#1}\fi
\ifx \arxivurl  \undefined \def \arxivurl#1{\textsf{#1}}\fi
\csname PreBibitemsHook\endcsname

\bibitem[\protect\citeauthoryear{Hiriart-Urruty
  et~al.}{2011}]{HIRIART-URRUTY-11}
\begin{barticle}
\bauthor{\bsnm{Hiriart-Urruty}, \binits{J.-B.}},
\bauthor{\bsnm{L{\'o}pez}, \binits{M.A.}},
\bauthor{\bsnm{Volle}, \binits{M.}}:
\batitle{The {$\epsilon$}-strategy in variational analysis: illustration with
  the closed convexification of a function}.
\bjtitle{Revista Matem\'atica Iberoamericana}
\bvolume{27}(\bissue{2}),
\bfpage{449}--\blpage{474}
(\byear{2011})
\doiurl{10.4171/RMI/643}
\end{barticle}
\endbibitem

\bibitem[\protect\citeauthoryear{Locatelli and Schoen}{2013}]{LOCATELLI-13}
\begin{bbook}
\bauthor{\bsnm{Locatelli}, \binits{M.}},
\bauthor{\bsnm{Schoen}, \binits{F.}}:
\bbtitle{Global Optimization}.
\bsertitle{{MOS}-{SIAM} Series on Optimization},
p. \bfpage{432}.
\bpublisher{Society for Industrial and Applied Mathematics},
\blocation{Philadelphia, PA}
(\byear{2013}).
\burl{http://epubs.siam.org/doi/abs/10.1137/1.9781611972672.fm}
\end{bbook}
\endbibitem

\bibitem[\protect\citeauthoryear{Crama}{1989}]{CRAMA-89}
\begin{barticle}
\bauthor{\bsnm{Crama}, \binits{Y.}}:
\batitle{Recognition problems for special classes of polynomials in 0–1
  variables}.
\bjtitle{Mathematical Programming}
\bvolume{44},
\bfpage{139}--\blpage{155}
(\byear{1989})
\end{barticle}
\endbibitem

\bibitem[\protect\citeauthoryear{Locatelli and Schoen}{2014}]{LOCATELLI-14}
\begin{barticle}
\bauthor{\bsnm{Locatelli}, \binits{M.}},
\bauthor{\bsnm{Schoen}, \binits{F.}}:
\batitle{On convex envelopes for bivariate functions over polytopes}.
\bjtitle{Mathematical Programming}
\bvolume{144}(\bissue{1-2, Ser. A}),
\bfpage{65}--\blpage{91}
(\byear{2014})
\doiurl{10.1007/s10107-012-0616-x}
\end{barticle}
\endbibitem

\bibitem[\protect\citeauthoryear{Locatelli}{2014}]{LOCATELLI-14a}
\begin{barticle}
\bauthor{\bsnm{Locatelli}, \binits{M.}}:
\batitle{A technique to derive the analytical form of convex envelopes for some
  bivariate functions}.
\bjtitle{Journal of Global Optimization}
\bvolume{59}(\bissue{2}),
\bfpage{477}--\blpage{501}
(\byear{2014})
\doiurl{10.1007/s10898-014-0177-z}
\end{barticle}
\endbibitem

\bibitem[\protect\citeauthoryear{Locatelli}{2016}]{LOCATELLI-16}
\begin{barticle}
\bauthor{\bsnm{Locatelli}, \binits{M.}}:
\batitle{Polyhedral subdivisions and functional forms for the convex envelopes
  of bilinear, fractional and other bivariate functions over general
  polytopes}.
\bjtitle{Journal of Global Optimization}
\bvolume{66}(\bissue{4}),
\bfpage{629}--\blpage{668}
(\byear{2016})
\doiurl{10.1007/s10898-016-0418-4}
\end{barticle}
\endbibitem

\bibitem[\protect\citeauthoryear{Locatelli}{2018}]{LOCATELLI-18a}
\begin{barticle}
\bauthor{\bsnm{Locatelli}, \binits{M.}}:
\batitle{Convex envelopes of bivariate functions through the solution of {KKT}
  systems}.
\bjtitle{Journal of Global Optimization}
\bvolume{72}(\bissue{2}),
\bfpage{277}--\blpage{303}
(\byear{2018})
\doiurl{10.1007/s10898-018-0626-1}
\end{barticle}
\endbibitem

\bibitem[\protect\citeauthoryear{Khademnia and Davarnia}{2024}]{KHADEMNIA-24}
\begin{barticle}
\bauthor{\bsnm{Khademnia}, \binits{E.}},
\bauthor{\bsnm{Davarnia}, \binits{D.}}:
\batitle{Convexification of bilinear terms over network polytopes}.
\bjtitle{Mathematics of operations research}
(\byear{2024})
\doiurl{10.1287/moor.2023.0001}
\end{barticle}
\endbibitem

\bibitem[\protect\citeauthoryear{Al-Khayyal and Falk}{1983}]{AL-KHAYYAL-83}
\begin{botherref}
\oauthor{\bsnm{Al-Khayyal}, \binits{F.}},
\oauthor{\bsnm{Falk}, \binits{J.}}:
Jointly constrained biconvex programming.
Mathematics of Operations Research
\textbf{8}
(1983)
\end{botherref}
\endbibitem

\bibitem[\protect\citeauthoryear{Sherali and Alameddine}{1990}]{SHERALI-90}
\begin{barticle}
\bauthor{\bsnm{Sherali}, \binits{H.}},
\bauthor{\bsnm{Alameddine}, \binits{A.}}:
\batitle{An explicit characterization of the convex envelope of a bivariate
  bilinear function over special polytopes}.
\bjtitle{Annals of Operations Research}
\bvolume{25},
\bfpage{197}--\blpage{209}
(\byear{1990})
\end{barticle}
\endbibitem

\bibitem[\protect\citeauthoryear{Linderoth}{2005}]{LINDEROTH-05}
\begin{barticle}
\bauthor{\bsnm{Linderoth}, \binits{J.}}:
\batitle{A simplicial branch-and-bound algorithm for solving quadratically
  constrained quadratic programs}.
\bjtitle{Mathematical Programming}
\bvolume{2},
\bfpage{251}--\blpage{282}
(\byear{2005})
\end{barticle}
\endbibitem

\bibitem[\protect\citeauthoryear{Anstreicher and Burer}{2010}]{ANSTREICHER-10}
\begin{barticle}
\bauthor{\bsnm{Anstreicher}, \binits{K.M.}},
\bauthor{\bsnm{Burer}, \binits{S.}}:
\batitle{Computable representations for convex hulls of low-dimensional
  quadratic forms}.
\bjtitle{Mathematical Programming}
\bvolume{124}(\bissue{1-2}),
\bfpage{33}--\blpage{43}
(\byear{2010})
\doiurl{10.1007/s10107-010-0355-9}
\end{barticle}
\endbibitem

\bibitem[\protect\citeauthoryear{Anstreicher}{2012}]{ANSTREICHER-12}
\begin{barticle}
\bauthor{\bsnm{Anstreicher}, \binits{K.}}:
\batitle{On convex relaxations for quadratically constrained quadratic
  programming}.
\bjtitle{Mathematical Programming}
\bvolume{136},
\bfpage{233}--\blpage{251}
(\byear{2012})
\end{barticle}
\endbibitem

\bibitem[\protect\citeauthoryear{Locatelli}{2024}]{LOCATELLI-24}
\begin{barticle}
\bauthor{\bsnm{Locatelli}, \binits{M.}}:
\batitle{A new technique to derive tight convex underestimators (sometimes
  envelopes)}.
\bjtitle{Computational optimization and applications}
\bvolume{87}(\bissue{2}),
\bfpage{475}--\blpage{499}
(\byear{2024})
\doiurl{10.1007/s10589-023-00534-8}
\end{barticle}
\endbibitem

\bibitem[\protect\citeauthoryear{Rockafellar and Wets}{1998}]{ROCKAFELLAR-98a}
\begin{bbook}
\bauthor{\bsnm{Rockafellar}, \binits{R.T.}},
\bauthor{\bsnm{Wets}, \binits{R.J.-B.}}:
\bbtitle{Variational Analysis}.
\bpublisher{Springer},
\blocation{Springer Dordrecht Heidelberg London New York}
(\byear{1998}).
\burl{http://www.springer.com/math/book/978-3-540-62772-2}
\end{bbook}
\endbibitem

\bibitem[\protect\citeauthoryear{Goebel}{2000}]{GOEBEL-00}
\begin{botherref}
\oauthor{\bsnm{Goebel}, \binits{R.}}:
Convexity, convergence and feedback in optimal control.
PhD thesis,
University of Washington, Seattle
(2000).
\url{https://digital.lib.washington.edu/dspace/handle/1773/5792}
\end{botherref}
\endbibitem

\bibitem[\protect\citeauthoryear{Gardiner et~al.}{2014}]{GARDINER-13}
\begin{barticle}
\bauthor{\bsnm{Gardiner}, \binits{B.}},
\bauthor{\bsnm{Jakee}, \binits{K.}},
\bauthor{\bsnm{Lucet}, \binits{Y.}}:
\batitle{Computing the partial conjugate of convex piecewise linear-quadratic
  bivariate functions}.
\bjtitle{Computational Optimization and Applications}
\bvolume{58}(\bissue{1}),
\bfpage{249}--\blpage{272}
(\byear{2014})
\doiurl{10.1007/s10589-013-9622-z}
\end{barticle}
\endbibitem

\bibitem[\protect\citeauthoryear{Moreau}{1965}]{MOREAU-65}
\begin{barticle}
\bauthor{\bsnm{Moreau}, \binits{J.-J.}}:
\batitle{Proximit\'e et dualit\'e dans un espace {H}ilbertien}.
\bjtitle{Bulletin de la Soci\'et\'e Math\'ematique de France}
\bvolume{93},
\bfpage{273}--\blpage{299}
(\byear{1965})
\end{barticle}
\endbibitem

\bibitem[\protect\citeauthoryear{Brenier}{1989}]{BRENIER-89a}
\begin{barticle}
\bauthor{\bsnm{Brenier}, \binits{Y.}}:
\batitle{Un algorithme rapide pour le calcul de transform\'ees de
  {L}egendre--{F}enchel discr\`etes}.
\bjtitle{Comptes rendus de l'Académie des sciences. Série 1, Mathématique}
\bvolume{308},
\bfpage{587}--\blpage{589}
(\byear{1989})
\end{barticle}
\endbibitem

\bibitem[\protect\citeauthoryear{Corrias}{1996}]{CORRIAS-93a}
\begin{barticle}
\bauthor{\bsnm{Corrias}, \binits{L.}}:
\batitle{Fast {L}egendre--{F}enchel transform and applications to
  {H}a\-milton--{J}acobi equations and conservation laws}.
\bjtitle{SIAM journal on numerical analysis}
\bvolume{33}(\bissue{4}),
\bfpage{1534}--\blpage{1558}
(\byear{1996})
\end{barticle}
\endbibitem

\bibitem[\protect\citeauthoryear{Lucet}{1996}]{LUCET-96a}
\begin{barticle}
\bauthor{\bsnm{Lucet}, \binits{Y.}}:
\batitle{A fast computational algorithm for the {L}egendre--{F}enchel
  transform}.
\bjtitle{Computational Optimization and Applications}
\bvolume{6}(\bissue{1}),
\bfpage{27}--\blpage{57}
(\byear{1996})
\end{barticle}
\endbibitem

\bibitem[\protect\citeauthoryear{Lucet}{1997}]{LUCET-97b}
\begin{barticle}
\bauthor{\bsnm{Lucet}, \binits{Y.}}:
\batitle{Faster than the {F}ast {L}egendre {T}ransform, the {L}inear-time
  {L}egendre {T}ransform}.
\bjtitle{Numerical Algorithms}
\bvolume{16}(\bissue{2}),
\bfpage{171}--\blpage{185}
(\byear{1997})
\end{barticle}
\endbibitem

\bibitem[\protect\citeauthoryear{Haque and Lucet}{2018}]{HAQUE-18}
\begin{barticle}
\bauthor{\bsnm{Haque}, \binits{T.}},
\bauthor{\bsnm{Lucet}, \binits{Y.}}:
\batitle{A linear-time algorithm to compute the conjugate of convex piecewise
  linear-quadratic bivariate functions}.
\bjtitle{Computational Optimization and Applications}
\bvolume{70}(\bissue{2}),
\bfpage{593}--\blpage{613}
(\byear{2018})
\doiurl{10.1007/s10589-018-0007-1}
\end{barticle}
\endbibitem

\bibitem[\protect\citeauthoryear{Lucet et~al.}{2009}]{LUCET-06}
\begin{barticle}
\bauthor{\bsnm{Lucet}, \binits{Y.}},
\bauthor{\bsnm{Bauschke}, \binits{H.H.}},
\bauthor{\bsnm{Trienis}, \binits{M.}}:
\batitle{The piecewise linear-quadratic model for computational convex
  analysis}.
\bjtitle{Computational Optimization and Applications}
\bvolume{43},
\bfpage{95}--\blpage{118}
(\byear{2009})
\end{barticle}
\endbibitem

\bibitem[\protect\citeauthoryear{Gardiner and Lucet}{2011}]{GARDINER-11a}
\begin{bchapter}
\bauthor{\bsnm{Gardiner}, \binits{B.}},
\bauthor{\bsnm{Lucet}, \binits{Y.}}:
\bctitle{Graph-matrix calculus for computational convex analysis}.
In: \beditor{\bsnm{Bauschke}, \binits{H.H.}},
\beditor{\bsnm{Burachik}, \binits{R.S.}},
\beditor{\bsnm{Combettes}, \binits{P.L.}},
\beditor{\bsnm{Elser}, \binits{V.}},
\beditor{\bsnm{Luke}, \binits{D.R.}},
\beditor{\bsnm{Wolkowicz}, \binits{H.}} (eds.)
\bbtitle{Fixed-Point Algorithms for Inverse Problems in Science and
  Engineering}.
\bsertitle{Springer Optimization and Its Applications},
vol. \bseriesno{49},
pp. \bfpage{243}--\blpage{259}.
\bpublisher{Springer},
\blocation{New York, NY}
(\byear{2011}).
\burl{http://dx.doi.org/10.1007/978-1-4419-9569-8\_12}
\end{bchapter}
\endbibitem

\bibitem[\protect\citeauthoryear{Lucet}{2017}]{LUCET-17}
\begin{botherref}
\oauthor{\bsnm{Lucet}, \binits{Y.}}:
Computational Convex Analysis (CCA) numerical library.
GitHub
(2017)
\end{botherref}
\endbibitem

\bibitem[\protect\citeauthoryear{Lucet}{2021}]{LUCET-21}
\begin{botherref}
\oauthor{\bsnm{Lucet}, \binits{Y.}}:
Computational Convex Analysis for bivariate piecewise linear-cubic functions
  (CCA2) numerical library.
GitHub
(2021)
\end{botherref}
\endbibitem

\bibitem[\protect\citeauthoryear{Hiriart-Urruty and
  Lucet}{2007}]{HIRIART-URRUTY-06}
\begin{barticle}
\bauthor{\bsnm{Hiriart-Urruty}, \binits{J.-B.}},
\bauthor{\bsnm{Lucet}, \binits{Y.}}:
\batitle{Parametric computation of the {L}egendre--{F}enchel conjugate}.
\bjtitle{Journal of Convex Analysis}
\bvolume{14}(\bissue{3}),
\bfpage{657}--\blpage{666}
(\byear{2007})
\end{barticle}
\endbibitem

\bibitem[\protect\citeauthoryear{Lucet}{2006}]{LUCET-05c}
\begin{barticle}
\bauthor{\bsnm{Lucet}, \binits{Y.}}:
\batitle{{F}ast {M}oreau envelope computation {I}: Numerical algorithms}.
\bjtitle{Numerical Algorithms}
\bvolume{43}(\bissue{3}),
\bfpage{235}--\blpage{249}
(\byear{2006})
\doiurl{10.1007/s11075-006-9056-0}
\end{barticle}
\endbibitem

\bibitem[\protect\citeauthoryear{Bauschke et~al.}{2008a}]{BAUSCHKE-07a}
\begin{barticle}
\bauthor{\bsnm{Bauschke}, \binits{H.H.}},
\bauthor{\bsnm{Goebel}, \binits{R.}},
\bauthor{\bsnm{Lucet}, \binits{Y.}},
\bauthor{\bsnm{Wang}, \binits{X.}}:
\batitle{The proximal average: Basic theory}.
\bjtitle{SIAM Journal on Optimization}
\bvolume{19}(\bissue{2}),
\bfpage{768}--\blpage{785}
(\byear{2008})
\end{barticle}
\endbibitem

\bibitem[\protect\citeauthoryear{Bauschke et~al.}{2008b}]{BAUSCHKE-06a}
\begin{barticle}
\bauthor{\bsnm{Bauschke}, \binits{H.H.}},
\bauthor{\bsnm{Lucet}, \binits{Y.}},
\bauthor{\bsnm{Trienis}, \binits{M.}}:
\batitle{How to transform one convex function continuously into another}.
\bjtitle{SIAM Review}
\bvolume{50}(\bissue{1}),
\bfpage{115}--\blpage{132}
(\byear{2008})
\end{barticle}
\endbibitem

\bibitem[\protect\citeauthoryear{Singh and Lucet}{2021}]{SINGH-21}
\begin{barticle}
\bauthor{\bsnm{Singh}, \binits{S.}},
\bauthor{\bsnm{Lucet}, \binits{Y.}}:
\batitle{Linear-time convexity test for low-order piecewise polynomials}.
\bjtitle{{SIAM} Journal on Optimization}
\bvolume{31}(\bissue{1}),
\bfpage{972}--\blpage{990}
(\byear{2021})
\doiurl{10.1137/19M1290851}
{\href{https://arxiv.org/abs/https://doi.org/10.1137/19M1290851}{{https://doi.org/10.1137/19M1290851}}}
\end{barticle}
\endbibitem

\bibitem[\protect\citeauthoryear{Lucet}{2021}]{CCA2}
\begin{botherref}
\oauthor{\bsnm{Lucet}, \binits{Y.}}:
{C}omputational {C}onvex {A}nalysis library 2
(2021).
\url{https://github.com/ylucet/CCA2}
\end{botherref}
\endbibitem

\bibitem[\protect\citeauthoryear{Lucet}{2010}]{LUCET-10}
\begin{barticle}
\bauthor{\bsnm{Lucet}, \binits{Y.}}:
\batitle{What shape is your conjugate? {A} survey of computational convex
  analysis and its applications}.
\bjtitle{SIAM Review}
\bvolume{52}(\bissue{3}),
\bfpage{505}--\blpage{542}
(\byear{2010})
\end{barticle}
\endbibitem

\bibitem[\protect\citeauthoryear{de~Berg et~al.}{2008}]{BERG-08}
\begin{bbook}
\bauthor{\bsnm{Berg}, \binits{M.}},
\bauthor{\bsnm{Kreveld}, \binits{M.}},
\bauthor{\bsnm{Overmars}, \binits{M.}},
\bauthor{\bsnm{Schwarzkopf}, \binits{O.}}:
\bbtitle{Computational Geometry},
\bedition{3}rd edn.,
p. \bfpage{386}.
\bpublisher{Springer},
\blocation{Berlin}
(\byear{2008}).
\bcomment{Algorithms and applications}.
\burl{http://www.springer.com/computer/theoretical+computer+science/book/978-3-540-77973-5}
\end{bbook}
\endbibitem

\bibitem[\protect\citeauthoryear{Kumar and Lucet}{2020}]{KUMAR-19}
\begin{bchapter}
\bauthor{\bsnm{Kumar}, \binits{D.}},
\bauthor{\bsnm{Lucet}, \binits{Y.}}:
\bctitle{Towards the biconjugate of bivariate piecewise quadratic functions}.
In: \beditor{\bsnm{Le~Thi}, \binits{H.A.}},
\beditor{\bsnm{Le}, \binits{H.M.}},
\beditor{\bsnm{Pham~Dinh}, \binits{T.}} (eds.)
\bbtitle{Optimization of Complex Systems: Theory, Models, Algorithms and
  Applications},
pp. \bfpage{257}--\blpage{266}.
\bpublisher{Springer},
\blocation{Cham}
(\byear{2020})
\end{bchapter}
\endbibitem

\bibitem[\protect\citeauthoryear{Karmarkar}{2024}]{KARMARKAR-24}
\begin{botherref}
\oauthor{\bsnm{Karmarkar}, \binits{T.}}:
Computing the conjugate of nonconvex bivariate piecewise linear-quadratic
  functions.
Master's thesis,
University of British Columbia
(2024).
\url{https://dx.doi.org/10.14288/1.0444097}
\end{botherref}
\endbibitem

\bibitem[\protect\citeauthoryear{Rockafellar}{1970}]{ROCKAFELLAR-70}
\begin{barticle}
\bauthor{\bsnm{Rockafellar}, \binits{R.T.}}:
\batitle{On the maximal monotonicity of subdifferential mappings}.
\bjtitle{Pacific Journal of Mathematics}
\bvolume{33},
\bfpage{209}--\blpage{216}
(\byear{1970})
\end{barticle}
\endbibitem

\bibitem[\protect\citeauthoryear{Patrinos and Sarimveis}{2011}]{PATRINOS-11}
\begin{barticle}
\bauthor{\bsnm{Patrinos}, \binits{P.}},
\bauthor{\bsnm{Sarimveis}, \binits{H.}}:
\batitle{Convex parametric piecewise quadratic optimization: Theory and
  algorithms}.
\bjtitle{Automatica}
\bvolume{47}(\bissue{8}),
\bfpage{1770}--\blpage{1777}
(\byear{2011})
\end{barticle}
\endbibitem

\bibitem[\protect\citeauthoryear{Gardiner and Lucet}{2013}]{GARDINER-11}
\begin{barticle}
\bauthor{\bsnm{Gardiner}, \binits{B.}},
\bauthor{\bsnm{Lucet}, \binits{Y.}}:
\batitle{Computing the conjugate of convex piecewise linear-quadratic bivariate
  functions}.
\bjtitle{Mathematical Programming}
\bvolume{139}(\bissue{1-2}),
\bfpage{161}--\blpage{184}
(\byear{2013})
\doiurl{10.1007/s10107-013-0666-8}
\end{barticle}
\endbibitem

\bibitem[\protect\citeauthoryear{Bauschke and Moursi}{2024}]{BAUSCHKE-24}
\begin{bbook}
\bauthor{\bsnm{Bauschke}, \binits{H.H.}},
\bauthor{\bsnm{Moursi}, \binits{W.M.}}:
\bbtitle{An Introduction to Convexity, Optimization, and Algorithms}
vol. \bseriesno{34}.
\bpublisher{Society for Industrial and Applied Mathematics},
\blocation{Philadelphia}
(\byear{2024})
\end{bbook}
\endbibitem

\bibitem[\protect\citeauthoryear{Hiriart-Urruty and
  Lemar{\'e}chal}{1993a}]{HIRIART-URRUTY-93a}
\begin{bbook}
\bauthor{\bsnm{Hiriart-Urruty}, \binits{J.-B.}},
\bauthor{\bsnm{Lemar{\'e}chal}, \binits{C.}}:
\bbtitle{Convex Analysis and Minimization Algorithms I}.
\bsertitle{Grundlehren der Mathematischen Wissenschaften [Fundamental
  Principles of Mathematical Sciences]},
vol. \bseriesno{305}.
\bpublisher{Springer},
\blocation{Berlin}
(\byear{1993}).
\bcomment{Vol I: Fundamentals}.
\burl{http://www.springer.com/math/book/978-3-540-56850-6}
\end{bbook}
\endbibitem

\bibitem[\protect\citeauthoryear{Hiriart-Urruty and
  Lemar{\'e}chal}{1993b}]{HIRIART-URRUTY-93b}
\begin{bbook}
\bauthor{\bsnm{Hiriart-Urruty}, \binits{J.-B.}},
\bauthor{\bsnm{Lemar{\'e}chal}, \binits{C.}}:
\bbtitle{Convex Analysis and Minimization Algorithms {II}}.
\bsertitle{Grundlehren der Mathematischen Wissenschaften [Fundamental
  Principles of Mathematical Sciences]},
vol. \bseriesno{306}.
\bpublisher{Springer},
\blocation{Berlin}
(\byear{1993}).
\bcomment{Vol II: Advanced theory and bundle methods}.
\burl{http://www.springer.com/math/book/978-3-540-56850-6}
\end{bbook}
\endbibitem

\bibitem[\protect\citeauthoryear{Rockafellar}{1970}]{ROCKAFELLAR-70a}
\begin{bbook}
\bauthor{\bsnm{Rockafellar}, \binits{R.T.}}:
\bbtitle{Convex Analysis}.
\bsertitle{Princeton Mathematical Series}.
\bpublisher{Princeton University Press},
\blocation{Princeton, N. J.}
(\byear{1970}).
\burl{http://press.princeton.edu/titles/1815.html}
\end{bbook}
\endbibitem

\bibitem[\protect\citeauthoryear{Kumar}{2019}]{KUMAR-19a}
\begin{botherref}
\oauthor{\bsnm{Kumar}, \binits{D.}}:
Conjugate of some rational functions and convex envelope of quadratic functions
  over a polytope.
Master's thesis,
{U}niversity of {B}ritish {C}olumbia
(2019)
\end{botherref}
\endbibitem

\end{thebibliography}

\end{document}